\documentclass[12pt,a4paper]{article}
\ifx\pdfpageheight\undefined\PassOptionsToPackage{dvips}{graphicx}\else%
\PassOptionsToPackage{pdftex}{graphicx}
\PassOptionsToPackage{pdftex}{color}
\fi

\usepackage{url,color}
\usepackage{verbatim}
\usepackage{latexsym}
\usepackage{amssymb,amstext,amsmath}
\usepackage{stmaryrd}
\usepackage{float} \floatstyle{boxed} \restylefloat{figure}
\usepackage{verbatim}
\usepackage{booktabs}
\usepackage{proof}
\usepackage{latexsym}
\usepackage{amssymb}
\usepackage{enumitem}
\usepackage{tikz-cd}

\ifx\pdfpageheight\undefined\relax\else%
\usepackage[hidelinks]{hyperref}
\pdfadjustspacing=1
\fi

\usepackage{amsthm}
\theoremstyle{definition}
\newtheorem{theorem}{Theorem}[section]
\newtheorem{definition}[theorem]{Definition}
\newtheorem{lemma}[theorem]{Lemma}
\newtheorem{corollary}[theorem]{Corollary}
\newtheorem{remark}[theorem]{Remark}
\newtheorem{example}[theorem]{Example}
\newtheorem{discussion}[theorem]{Discussion}

\setlength{\oddsidemargin}{0in} % so, left margin is 1in
\setlength{\textwidth}{6.27in} % so, right margin is 1in
\setlength{\topmargin}{0in} % so, top margin is 1in
\setlength{\headheight}{0in}
\setlength{\headsep}{0in}
\setlength{\textheight}{9.19in} % so, foot margin is 1.5in
\setlength{\footskip}{.8in}

\setlist{itemsep=.25em} % reduce vertical spacing in lists

%%% Macros
\newcommand{\nat}{\mathbb{N}}
\newcommand{\XX}{\mbox{\sf X}}
\newcommand{\ndE}{\vdash_{\es}}
\newcommand{\ndX}{\vdash_{X}}

\newcommand{\ndY}{\vdash_{Y}}
\newcommand{\ndXx}{\vdash_{X,x}}
\newcommand{\ndXxi}{\vdash_{X,\vec x_i}}
\newcommand{\set}[1]{\{#1\}}
\newcommand{\es}{\emptyset}

\newcommand{\covd}{\mathrel{\vartriangleleft}}
\newcommand{\covs}{\mathrel{\vartriangleright}}
\newcommand{\covT}{\mathrel{\vartriangleleft_T}}

\newcommand*{\C}{{\mathbb{C}}}
\newcommand*{\D}{{\mathbb{D}}}

\newcommand*{\subvs}{_{\mathrm{vs}}}
\newcommand*{\subrn}{_{\mathrm{rn}}}
\newcommand*{\subts}{_{\mathrm{ts}}}
\newcommand*{\Cvs}{\mathbb{C}\subvs}
\newcommand*{\Crn}{\mathbb{C}\subrn}
\newcommand*{\Cts}{\mathbb{C}\subts}

\newcommand*{\Fvs}{\Vdash\subvs}
\newcommand*{\Frn}{\Vdash\subrn}

\newcommand*{\stdcat}[1]{\textup{\bfseries #1}}
\newcommand*{\Set}{\stdcat{Set}}
\newcommand*{\Mod}{\stdcat{Mod}}
\newcommand*{\Fin}{\stdcat{Fin}}
\newcommand*{\Finmon}{\Fin_{\mathrm{mon}}}
\newcommand*{\blank}{\textup{--}}
\newcommand*{\proves}{\mathrel{\vdash}}
\newcommand*{\forces}{\mathrel{\Vdash}}

\newcommand*{\sor}{\mathrel{\cup}}

\newcommand*{\op}{^{\mathrm{op}}}
\newcommand*{\denote}[1]{\llbracket #1 \rrbracket}

\DeclareMathOperator{\id}{id}
\DeclareMathOperator{\Hom}{Hom}

\DeclareMathOperator{\Sh}{Sh}
\DeclareMathOperator{\Psh}{PSh}

\DeclareMathOperator{\cod}{cod}
\DeclareMathOperator{\Tm}{Tm}
\DeclareMathOperator{\Fact}{Fact}

\newcommand*{\yo}{\mathbf{y}} % Yoneda
\newcommand*{\as}{\mathbf{a}} % associated sheaf
\newcommand*{\incl}{\mathbf{i}} % inclusion functor
%\DeclareFontFamily{U}{min}{}
%\DeclareFontShape{U}{min}{m}{n}{<-> udmj30}{}
\renewcommand*{\phi}{\varphi}

\begin{document}

\title{Syntactic Forcing Models for Coherent Logic}

\author{
Marc Bezem\thanks{Department of Informatics, University of Bergen,
{\tt bezem@ii.uib.no}}
\and
Ulrik Buchholtz\thanks{Department of Philosophy, Carnegie Mellon
  University \emph{and} Department of Mathematics, Technische Universit{\"a}t Darmstadt,
{\tt buchholtz@mathematik.tu-darmstadt.de}}
\and
Thierry Coquand\thanks{Department of Computer Science and Engineering, Chalmers/University of Gothenburg, {\tt coquand@chalmers.se}}
}
\date{\today}
\maketitle

\begin{abstract}
  We present three syntactic forcing models for coherent logic.  These
  are based on sites whose underlying category only depends on the
  signature of the coherent theory, and they do not presuppose that
  the logic has equality.
  As an application we give a coherent theory $T$ and a
  sentence $\psi$ which is $T$-redundant (for any
  geometric implication $\phi$, possibly with equality, if
  $T + \psi \proves \phi$, then $T \proves \phi$), yet false in the
  generic model of $T$. This answers in the
  negative a question by Wraith.
\end{abstract}

\section{Introduction}

Coherent logic concerns implications between positive formulas, those
built up from atoms using only the connectives $\top$, $\bot$,
$\lor$, $\land$ and $\exists$. A first-order theory $T$ is
\emph{coherent} if it is axiomatized by sentences of the form
$\forall\vec x.\, \phi\to\psi$ where $\phi,\psi$ are positive
formulas. Such sentences are also called \emph{coherent
  implications} or \emph{coherent sentences}.%
\footnote{NB\;Sometimes positive formulas are also called coherent formulas,
  but a coherent sentence in our sense need not be a coherent formula in this sense.}
Any coherent implication is equivalent to a finite
conjunction of sentences of the form
\begin{equation}\label{eq:coherent-implication}
  \forall \vec x.\,(\phi_0 \to  \exists\vec x_1.\phi_1 \lor\cdots\lor
  \exists\vec x_k.\phi_k),
\end{equation}
where the $\phi_i$ are conjunctions of atoms, and we can and will thus
always assume that any coherent theory $T$ is presented by axioms of this
form.

Coherent theories include all universal Horn theories. The axioms of equality
are all coherent implications, and over logic with equality all
algebraic theories, as well as the theories of fields and local rings,
are coherent. Coherent implications form a \emph{Glivenko class},
i.e., if a coherent implication is derivable from a coherent theory
using classical logic, then it is already so derivable
intuitionistically. Furthermore, they are preserved by the inverse
image parts of geometric morphisms between toposes, and for every
coherent theory $T$ there is a \emph{generic model} $M_T$ in a sheaf
topos $\Set[T]$, called the \emph{classifying topos} of $T$,
classifying models of $T$ in any sheaf topos (cf.\
Section~\ref{sec:classifying-toposes} for precise definitions). The
geometric perspective offered by sheaf toposes also motivates the
introduction of \emph{geometric theories}, those axiomatized by
\emph{geometric implications}, i.e., implications between infinitary positive formulas, or equivalently,
those with axioms of the form~\eqref{eq:coherent-implication} where
the disjunction is allowed to be infinite (as usual when dealing with
infinitary fragments, we require that all formulas only have finitely
many free variables).
We assume the existence of a Grothendieck universe or some other means of
talking about small sets. We then only consider infinitary conjunctions and
disjunctions indexed by small sets.

The generic model of a coherent theory $T$ can be thought of as a
forcing model, and in this paper we present three other forcing models
for coherent logic without equality, providing proofs of soundness and
completeness. The models can be understood as living in certain sheaf
toposes, and in a companion paper we shall characterize these as
classifying toposes of certain geometric theories related to $T$.

A main goal of the current paper is to answer the following question
of Wraith:
\begin{quote}
\itshape The problem of characterising all the non-geometric properties of a generic
model appears to be difficult. If the generic model of a geometric theory $T$ satisfies
a sentence $\alpha$ then any geometric consequence of  $T+(\alpha)$ has to be a
consequence of $T$. We might call $\alpha$~~$T$-redundant. Does the generic
$T$-model satisfy all $T$-redundant sentences?
\normalfont
\hfill\cite[p.~336]{Wraith1980}
\end{quote}
We shall answer this in the negative, even for a coherent theory $T$.
For the current volume in honour of Brouwer it is appropriate to clarify
that the question makes constructive sense, and that our answer is
constructive too.
The proper understanding of Wraith's question requires a fair amount
of categorical logic,  which we develop in Section~\ref{sec:PrelimSheafTopos}.
For the construction of the generic model we rely on
Coste and Coste~\cite{CosteCoste1975}. These preparations postpone
the proof of the negative answer to Wraith's question until the very last
section of this paper. Therefore we find it useful to sketch this proof
already here.

We shall give a consistent coherent theory $T$ and a sentence $\psi$
such that both $\psi$ and $\neg\psi$ are $T$-redundant.
Since the generic model of $T$ cannot satisfy both $\psi$ and $\neg\psi$,
this provides a negative answer to Wraith's question.
However, this answer is not as informative as one would hope:
one would like to know which one of $\psi$ and $\neg\psi$
is true in the generic model of $T$. For this we have to take
closer look at the argument why both $\psi$ and $\neg\psi$ are $T$-redundant.

Soundness for our forcing models means that every intuitionistic consequence (coherent or
not, possibly with $=$) of $T$ is forced.
In fact, we prove soundness for all infinitary formulas,
including all geometric implications.
Geometric completeness means
that any (generalized) \emph{geometric implication without $=$} that is forced
is an intuitionistic consequence of the coherent theory.

Now let $\psi$ be any sentence without $=$
that is forced in any one of our models.
Let $\phi$ be
a geometric implication without $=$ such that $\psi\to\phi$
is intuitionistically provable in $T$.
Then by soundness $\psi\to\phi$ is forced,
and hence $\phi$ is forced.
Since $\phi$ is geometric without $=$ it follows by geometric completeness
that $\phi$ is provable in $T$ \emph{without using} $\psi$.
A sentence like $\psi$ is thus $T$-redundant in logic without equality.
It can then be shown by a  cut-elimination argument
that $\psi$ is also $T$-redundant in logic with equality.

The actual example we give of a coherent theory $T$ and
$T$-redundant sentences $\psi$ and $\neg\psi$ does not involve $=$
and is moreover \emph{relational}, that is, in a signature without functions.
The $T$-redundancy follows from $\Fvs\psi$ and $\Frn\neg\psi$, using
two of our forcing models.
The final step in the constructive answer to Wraith's question is the observation
that, due to the relational signature, $\Fvs$ exactly characterizes truth in the
generic model of $T$. In other words, the example sentence $\neg\psi$ is
$T$-redundant, yet false in the generic model.

The rest of the paper is organized as follows. In
Section~\ref{sec:PrelimSheafTopos} we recall in a mostly
self-contained way some preliminaries on site models in sheaf
toposes. Then in Section~\ref{sec:basics} we introduce our
site models, which we then compare to the generic model
in Section~\ref{sec:comparisons}. We carry out
a uniform completeness proof in
Section~\ref{sec:coherent_completeness}, and we give examples in
Section~\ref{sec:examples}.

We would like to emphasize that although we include a fair amount of
categorical logic in Sections~\ref{sec:PrelimSheafTopos}
and~\ref{sec:comparisons}, our forcing models and the accompanying
soundness and completeness results can be understood without this
machinery. Moreover, all results are constructively valid.

\section{Preliminaries on sheaf toposes}\label{sec:PrelimSheafTopos}

We shall in this section
briefly recall the main definitions and some auxiliary results
concerning models in sheaf toposes.
We refer to \cite{Elephant,MacLaneMoerdijk1994} for detailed
expositions.

We begin with the notion of a Grothendieck topology on a small
category $\C$ and ways to
present these. We write $\yo : \C \to \Psh(\C)$ for the Yoneda
embedding of $\C$ into the \emph{presheaf topos}
$\Psh(\C)$ consisting of functors $\C\op\to\Set$:
$\yo(C)=\Hom_\C(\blank,C)$. If $P$ is a presheaf, $f : D \to C$ in
$\C$ and $x\in P(C)$, we write $xf$ for the action $P(f)(x)$ of $f$ on
$x$ so that if also $g : E \to D$, then $x(fg)=(xf)g$.

A \emph{sieve} on an object $C$ is a
subobject $S\subseteq\yo(C)$ in $\Psh(\C)$, viz., a collection of
morphisms in $\C$ with codomain $C$ closed under precomposition with
arbitrary morphisms. A set of arrows $U$ with codomain $C$ is called a
\emph{$C$-sink}; in particular, we think of sieves on $C$ as certain
$C$-sinks. For $g : D \to C$ a morphism with codomain $C$ and $S$ a
$C$-sink, we define the $D$-sink $g^*(S)$ to be the set $\set{h \mid
  \cod(h)=D \land gh\in S}$. This is a sieve whenever $S$ is.

\begin{definition}\label{def:Gtop}
  A \emph{Grothendieck topology} on $\C$ is a function $J$ that
  assigns to every object $C$ a collection $J(C)$ of sieves on $C$,
  such that
  \begin{enumerate}[label=(\roman*)]
  \item the maximal sieve $\{\, f \mid \cod(f)=C\,\}$ denoted by $t_C$ is in
    $J(C)$;
  \item (stability) if $S\in J(C)$ and $g:D\to C$, then $g^*(S)$
    is in $J(D)$;
  \item (transitivity) if $S\in J(C)$ and $R$ is any sieve on $C$ such
    that $f^*(R)\in J(D)$ for every $f:D\to C$ in $S$, then $R$ is in
    $J(C)$.
  \end{enumerate}
\end{definition}

A category $\C$ with a Grothendieck topology is called a
\emph{site}.
If $U$ and $V$ are $C$-sinks,
we say that $U$ \emph{refines} $V$ if every $f\in U$ factors through
some $g\in V$.
A $C$-sink $U$ is called a \emph{$J$-cover} if there
exists $S \in J(C)$ refining $U$.

It is often useful to present a topology using a basis or
a (saturated) coverage:
\begin{definition}\label{def:Gbasis}
  A \emph{basis} for a Grothendieck topology on a category $\C$ with
  finite limits consists of a relation $\covd$ between objects $C$ of
  $\C$ and $C$-sinks $U$ (if $C \covd U$ we say that $U$ is a
  \emph{basic cover} of $C$) such that the following conditions hold:
  \begin{enumerate}[label=(\roman*)]
  \item\label{def:Gbasis-i} if $f : D \to C$ is an isomorphism, then $C \covd \{f\}$;
  \item\label{def:Gbasis-ii} if $C \covd U$ and $g : D \to C$, then $D \covd \set{g^*f \mid f\in U}$;
%  \item if $C \covd U$ and $f \covd U_f$ for each $f\in U$, then
  \item\label{def:Gbasis-iii} if $C \covd U$ and $D \covd V_f$ for each $f: D\to C$ in $U$, then
    $C \covd \bigcup_{f\in U} {f V_f}$.
  \end{enumerate}
  Here and below $g^* f$ is the pullback of $f$ along $g$.
  % $f \covd U_f$ means $D \covd f^*(U_f)$ if $f : D \to C$.}
  In case $\C$ does not admit finite limits, we have to replace
  \ref{def:Gbasis-ii} by a weaker condition, namely that of a
  coverage. A \emph{coverage} on a small category $\C$ consists of a
  relation $\covd$ between objects $C$ of $\C$ and $C$-sinks $U$ such that
  \begin{enumerate}[label=(ii')]
  \item\label{def:Gbasis-ii'} if $C \covd U$, and $g : D \to C$, then
    there exists $V \covs D$ such that $gV$ refines $U$.
  \end{enumerate}
  If a coverage also satisfies the
  conditions \ref{def:Gbasis-i} and \ref{def:Gbasis-iii}, then it will
  be called a \emph{saturated coverage}.\footnote{Beware that this
    terminology is not quite standard: a ``saturated
    coverage'' is sometimes required to satisfy as well (iv) if
    $U\covs C$ refines a $C$-sink $V$, then already $V\covs
    C$. Saturated coverages in this sense are in bijection with
    Grothendieck topologies.}
  Note that every basis is a saturated coverage by choosing
  $V=\set{g^*f \mid f\in U}$ in \ref{def:Gbasis-ii'}.
\end{definition}

The reader may check that every saturated coverage \emph{generates} a
topology by letting $J(C)$ consist of those $S$ for which there exists
$U\covs C$ such that $U\subseteq S$ (i.e., $U$ refines $S$, by the
closure property of a sieve $S$).

\begin{lemma}\label{lem:Gcommonrefinement}
Let $\covd$ be a saturated coverage on $\C$. If $C\covd U$ and $C\covd V$,
then $U$ and $V$ have a common refinement.
\end{lemma}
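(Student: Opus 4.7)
The plan is to use the coverage axiom (ii') to pull the second cover $V$ back through each arrow of the first cover $U$, and then apply the transitivity-like axiom (iii) to glue the resulting pieces back into a single cover of $C$ that refines both $U$ and $V$.

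More concretely, I would proceed as follows. Start from the two given covers $C \covd U$ and $C \covd V$. For each arrow $f : D \to C$ in $U$, apply condition \ref{def:Gbasis-ii'} to the cover $C \covd V$ and the morphism $f$: this yields a sink $W_f$ with $D \covs W_f$ such that $fW_f$ refines $V$. Now apply condition \ref{def:Gbasis-iii} to the original cover $U$ together with the family $\{W_f\}_{f \in U}$, obtaining
\[
  C \covd W \;:=\; \bigcup_{f \in U} f\,W_f.
\]

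It then remains to check that $W$ refines both $U$ and $V$. Any arrow in $W$ is of the form $f \circ h$ for some $f \in U$ and some $h \in W_f$, so it factors through $f \in U$ and thus refines $U$. On the other hand, by construction $fW_f$ refines $V$, so every such composite also factors through some arrow in $V$, and hence $W$ refines $V$ as well.

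The argument is essentially a direct unfolding of the definitions, so I do not anticipate any real obstacle; the only point that requires a moment of care is to check that condition \ref{def:Gbasis-ii'} is being invoked in the correct direction (refining $V$, not $U$, along arrows of $U$) so that the subsequent application of \ref{def:Gbasis-iii} produces a common refinement rather than just a re-covering of $U$.
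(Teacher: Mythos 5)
Your proof is correct and is essentially the paper's own argument with the roles of $U$ and $V$ interchanged (the paper pulls $U$ back along the arrows of $V$; since the statement is symmetric this is immaterial). Both proofs apply \ref{def:Gbasis-ii'} arrowwise and then glue with \ref{def:Gbasis-iii}, so there is nothing further to add.
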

\begin{proof}
Let $C\covd U,V$. By \ref{def:Gbasis-ii'} there exists for every $g:D\to C$ in $V$
a $W_g \covs D$ such that $g W_g$ refines $U$. Then by \ref{def:Gbasis-iii} we have
$\bigcup_{g\in V} g W_g \covs C$ refining both $U$ and $V$ per construction.
\end{proof}

\begin{definition}\label{def:Gcompat}
  Let $\C$ be a category and $U$ a $C$-sink for some
  $C\in\C$. Write $U=\{f_i:C_i \to C\}_{i\in I}$. Let $P:\C\op\to\Set$ be a
  presheaf. A \emph{compatible family} of elements $s_i\in P(C_i)$ for
  $U$ is one such that whenever $g : D \to C_i$ and $h : D \to C_j$
  satisfy $f_ig=f_jh$, then $s_ig=s_jh$.
\end{definition}

It is clear that if $\C$ has pullbacks, then it suffices to check
compatibility at each pullback corner $C_i \times_C C_j$ (it is of
course necessary, too).  A matching
family for a sieve $S$ is just a compatible family, but in this case
it suffices to check that $x_f g=x_{fg}$ for all $f\in S$ and all
$g$. This just amounts to a natural transformation $S\to P$.

\begin{definition}\label{def:Gsheaf}
  A presheaf $P : \C\op \to \Set$ is a \emph{sheaf} for the coverage $\covd$,
  if for every object $C\in\C$, every $U \covs C$, and every
  compatible family of elements $(s_f)_{f\in U}$ for $U$, there exists a unique
  element $s\in P(C)$ such that $s_f=sf$ for all $f\in U$.
\end{definition}
By an adaptation of the argument of \cite[Prop.\
III.4.1]{MacLaneMoerdijk1994} we have for a coverage satisfying
\ref{def:Gbasis-iii} that $P$ is a sheaf for $\covd$ if{f}
$P$ is a sheaf for the generated topology, in the sense that whenever
$S \hookrightarrow \yo C$ is a covering sieve and $S \to P$ is a
compatible family, there is a unique morphism $\yo C\to P$ (i.e., an
element of $P(C)$) making the triangle commute:
\[\begin{tikzcd}
    S \arrow{r}\arrow[hookrightarrow]{d} & P \\ \yo C\ar[dashed]{ur} &
  \end{tikzcd}\]

\begin{definition}
  For a Grothendieck topology $J$ or a saturated coverage $\covd$ on
  $\C$, let $\Sh(\C,J)$, resp.\ $\Sh(\C,\covd)$, be the full subcategory
  of $\Psh(\C)$ consisting of sheaves wrt.\ $J$, resp.\ $\covd$. If
  $J$ or $\covd$ can be inferred from the context, we just write
  $\Sh(\C)$. A category equivalent to one of this form is called a
  \emph{Grothendieck topos} or a \emph{sheaf topos}.
\end{definition}

\begin{example}\label{exa:presheaftopos}
  For any small category $\C$, the topology $J$ consisting just of the
  maximal sieves, $J(C)=\{t_C\}$, is called the \emph{trivial
    topology}. It is generated by the saturated coverage $\covs$ where
  $U\covs C$ if and only if $U=\{f\}$ with $f:C'\to C$ an
  isomorphism. In this way every presheaf is a sheaf, so
  $\Psh(\C)=\Sh(\C)$, the \emph{presheaf topos}, is indeed a sheaf
  topos.
\end{example}

The inclusion functor $\incl:\Sh(\C,J)\hookrightarrow\Psh(\C)$ has a left
adjoint, $\as:\Psh(\C)\to\Sh(\C,J)$, called \emph{the associated
  sheaf} functor, cf.\ \cite[Sect.~III.5]{MacLaneMoerdijk1994}.%
\footnote{\label{ft:sheafification} This functor is also called
  \emph{sheafification}. We note that its definition is
  constructive for topologies generated by coverages with
  finite covers. Otherwise, known definitions require either choice
  for the sets indexing the covers or impredicative quantification.}
The unit of the adjunction $\eta$ is a natural transformation with components
$\eta_P : P \to \as P$ for each presheaf $P$ (we write $\eta$ for
$\eta_P$ when no confusion can arise). Elements of $\as P(C)$ are
given by equivalences classes of \emph{locally compatible families}
for covers $U\covs C$: families of elements $s_i\in P(C_i)$ for each
$f_i:C_i \to C$ in $U$ such that for every $i$, $j$, and $g:D\to C_i$ and
$h:D\to C_j$ such that $f_ig=f_jh$, the elements $s_ig=s_jh$ are
\emph{locally equal}. The latter means that there is
is a further cover $V\covs D$ such that $s_igk=s_jhk$ for every $k\in V$.

Two locally compatible families are equivalent if they agree \emph{locally}
on a cover that refines the corresponding covers. More precisely,
if $s_i\in P(C_i)$ and $t_j\in P(C_j)$ are locally compatible families
for $U\covs C$ and $V\covs C$, respectively, then they are equivalent
if there exists $W\covs C$ refining both $U$ and $V$, such that for
all $h: D\to C$ in $W$ and any way $h$ factors through $U$ and through $V$,
say $h = f_i h_i = g_j h_j$ with $f_i \in U$ and $g_j \in V$,
we have that $s_i h_i$ and $t_j h_j$ are locally equal in $D$. Note that the
particular $i$ and $j$ depend on the factorization and that there
is at least one of each for each $h$.

\subsection{Geometric morphisms}

The natural notion of a morphism between sheaf toposes
$\mathcal E, \mathcal F$ is that of a \emph{geometric morphism}
$F:\mathcal E\to\mathcal F$. This is a pair of adjoint functors,
\[
  \begin{tikzcd}
    \mathcal E \arrow[shift right=1]{r}[swap]{F_*} &
    \mathcal F \arrow[shift right=1]{l}[swap]{F^*}
  \end{tikzcd}, \quad F^* \dashv F_*,
\]
such that $F^*$ preserves finite limits. Here, $F^*$ is called the
\emph{inverse image} functor and $F_*$ is called the \emph{direct
  image} functor. The geometric morphisms from $\mathcal E$ to
$\mathcal F$ form a category, $\Hom(\mathcal E,\mathcal F)$ where the
morphisms are natural transformations between the inverse image parts
(or equivalently between the direct image parts in the opposite
direction). Cf., e.g., Chapter~VII of \cite{MacLaneMoerdijk1994} for
more information about geometric morphisms.

The first example of a geometric morphism is the inclusion of the
sheaf topos into the presheaf topos on a site $\C$, $(\as \dashv \incl):
\Sh(\C) \to \Psh(\C)$. The direct image functor is the inclusion
functor and the inverse image functor is the associated sheaf functor,
which is indeed left exact (in particular, it preserves
monomorphisms).

We now wish to consider geometric morphisms between sheaf toposes
induced by functors between the underlying sites. Recall that any
functor $F : \C \to \D$ between small categories induces adjunctions
\[
  \begin{tikzcd}
    \Psh(\C) \arrow[shift left=0.6em]{r}{F_!}\arrow[shift right=0.6em]{r}[swap]{F_*} &
    \Psh(\D) \arrow{l}[description]{F^*}
  \end{tikzcd}
\]
with $F_! \dashv F^* \dashv F_*$. Here, $F^*$ is just composition with
$F$, while $F_!$ and $F_*$ are the left and right Kan extensions along
$F$. Since $F^*$ is a right adjoint, it preserves all limits, so the
pair $(F^* \dashv F_*)$ gives a geometric morphism
$F : \Psh(\C) \to \Psh(\D)$.

We first consider the conditions under which this induces a geometric
morphism $F : \Sh(\C) \to \Sh(\D)$ when $\C$ and $\D$ are sites.
\begin{definition}\label{def:site-comorphism}
  A functor $F : \C \to \D$ between sites is a \emph{comorphism of
    sites} if it has the following:
  \begin{description}
  \item[Covering lifting property] For any cover $V \covs F(C)$ in
    $\D$ with $C \in \C$, there is a cover $U \covs C$ in $\C$ such
    that $F(U)$ refines $V$.
  \end{description}
\end{definition}
The following result is essentially \cite[C2.3.18]{Elephant}, where
the above property is called \emph{cover-reflecting} (see also
\cite[Theorem~VII.10.5]{MacLaneMoerdijk1994}).

\begin{theorem}\label{thm:comorphism}
  Let $F : \C \to \D$ be a functor between sites. Then the following
  are equivalent:
  \begin{enumerate}[label={\normalfont(}\roman*\/{\normalfont)}]
  \item\label{thm:comorphism-i} $F$ is a comorphism.
  \item\label{thm:comorphism-ii} There is a geometric morphism
    $\Sh(\C)\to\Sh(\D)$ whose inverse image functor is the composite
    \[ \Sh(\D) \xrightarrow{i}{} \Psh(\D)
      \xrightarrow{F^*}{} \Psh(\C) \xrightarrow{\as}{} \Sh(\C).\]
  \item\label{thm:comorphism-iii} $F_* : \Psh(\C)\to\Psh(\D)$ maps
    sheaves to sheaves.
  \end{enumerate}
\end{theorem}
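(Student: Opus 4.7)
The plan is to prove the cycle $\text{(i)} \Rightarrow \text{(iii)} \Rightarrow \text{(ii)} \Rightarrow \text{(i)}$.

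For $\text{(i)} \Rightarrow \text{(iii)}$, the argument is direct. Let $P$ be a sheaf on $\C$ and let $T \hookrightarrow \yo D$ be a covering sieve on some $D \in \D$. A matching family for $T$ in $F_* P$ corresponds, via the presheaf adjunction $F^* \dashv F_*$, to a natural transformation $\tau : F^* T \to P$ in $\Psh(\C)$. To construct the amalgamation I would define, for each $f : F(C) \to D$ with $C \in \C$, an element of $P(C)$: the pullback sieve $f^*(T)$ is covering on $F(C)$, so by the covering lifting property there is a cover $U \covs C$ in $\C$ with $F(U)$ refining $f^*(T)$, and for each $h : C' \to C$ in $U$ the morphism $f F(h)$ lies in $T$, yielding $\tau(f F(h)) \in P(C')$. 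This family is compatible by naturality of $\tau$ and amalgamates uniquely to some $\tilde\tau_{C,f} \in P(C)$ by the sheaf property of $P$. Independence of the choice of $U$ follows from Lemma~\ref{lem:Gcommonrefinement}, while naturality in $(C,f)$ and uniqueness of the extension are routine.

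For $\text{(iii)} \Rightarrow \text{(ii)}$, since $F_*$ preserves sheaves the composite $F_* \circ i : \Sh(\C) \to \Sh(\D)$ is well-defined, and chaining the adjunctions $\as \dashv i$ and $F^* \dashv F_*$ yields
\[
  \Sh(\C)(\as F^* i Q, P) \cong \Psh(\C)(F^* i Q, i P) \cong \Psh(\D)(i Q, F_* i P) \cong \Sh(\D)(Q, F_* i P),
\]
where the last step uses (iii). Left exactness of $\as F^* i$ holds because $i$ is a right adjoint, $F^*$ has the left adjoint $F_!$, and $\as$ is left exact as is standard for sheafification.

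The main obstacle is $\text{(ii)} \Rightarrow \text{(i)}$. Given a cover $V \covs F(C)$ in $\D$, let $\langle V \rangle \hookrightarrow \yo F(C)$ be the generated sieve. Since covering sieves induce isomorphisms after sheafification, the canonical map $\bigsqcup_{v : D_v \to F(C) \text{ in } V} \as \yo D_v \to \as \yo F(C)$ is an epimorphism in $\Sh(\D)$. As a left adjoint, $G^* = \as F^* i$ preserves colimits and epimorphisms, so $\{G^*(\as \yo v) : v \in V\}$ is a jointly epimorphic family in $\Sh(\C)$ with codomain $G^*(\as \yo F(C))$. Tracking the canonical element obtained from $\id_{F(C)} \in F^* \yo F(C)(C)$ through the two sheafification units and the pullback along $F$ produces an element $\alpha \in G^*(\as \yo F(C))(C)$; the jointly epimorphic property then yields a cover $U$ of $C$ in $\C$ such that for each $h : C' \to C$ in $U$ the restriction $\alpha|_{C'}$ factors through some $G^*(\as \yo v)$. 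Unpacking the two levels of sheafification and exploiting the local-equality condition forces each $F(h)$, after passing to a suitable further cover of $C'$ in $\C$, to factor through some element of $V$, and the resulting refined cover of $C$ witnesses the covering lifting property. The principal technical difficulty is the careful bookkeeping required to convert the $\D$-local factorizations produced by sheafification back into a genuine $\C$-cover of $C$.
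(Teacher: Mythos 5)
Your implications (i)$\Rightarrow$(iii) and (iii)$\Rightarrow$(ii) are correct and are essentially the standard arguments (the paper itself gives no proof, deferring to \cite[C2.3.18]{Elephant}, where the substantive equivalence proved is (i)$\Leftrightarrow$(iii)). The problem is (ii)$\Rightarrow$(i), and it sits exactly in the ``bookkeeping'' you defer. Every identification you must make inside $G^*(\as\yo F(C))=\as F^*(\as\yo F(C))$ is only a \emph{local} equality, and the inner locality lives over covers of objects $F(C')$ \emph{in $\D$}, not in $\C$. Unpacking carefully, what you extract from local surjectivity is a cover $U\covs C$ in $\C$ such that each $F(h)$, $h\in U$, factors through $V$ only after restricting along some cover of its domain in $\D$. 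That conclusion is vacuous: for any covering sieve $V$ on $F(C)$ and \emph{any} $g:E\to F(C)$, the pullback $g^*V$ covers $E$ and $gk\in V$ for every $k\in g^*V$, so every morphism into $F(C)$ ``locally factors through $V$''. Converting those $\D$-local factorizations into genuine ones is precisely the covering lifting property you are trying to prove, so the argument is circular.

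Moreover, no bookkeeping can close this gap, because (ii) does not in fact imply (i). Take $\C=\mathbf{1}$ with the trivial topology, let $\D$ be the poset $d_1\le d_0$ with $\{d_1\to d_0\}$ covering $d_0$, and let $F(*)=d_0$. Sheaves on $\D$ are presheaves $Q$ with $Q(d_0)\to Q(d_1)$ bijective, so $\as F^*\incl$ is the functor $Q\mapsto Q(d_0)$, an \emph{equivalence} $\Sh(\D)\to\Set=\Sh(\C)$; hence (ii) holds. But the cover $\{d_1\to d_0\}$ of $F(*)$ is not refined by the $F$-image of the only cover $\{\id_*\}$ of $*$, and $F_*P$ (with $F_*P(d_0)=P(*)$, $F_*P(d_1)=1$) is a sheaf only when $P(*)$ is a singleton; so (i) and (iii) both fail. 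The statement you should aim for is (i)$\Leftrightarrow$(iii) together with (iii)$\Rightarrow$(ii). For (iii)$\Rightarrow$(i) the standard device is the sheaf $\Omega_J$ of $J$-closed sieves: if $F_*\Omega_J$ is a sheaf and $V$ is a covering sieve on $F(C)$, then the $J$-closure of $F^*V\subseteq F^*\yo F(C)$ and the maximal subpresheaf restrict to the same element along every $v\in V$, hence coincide by separatedness, and evaluating the density of $F^*V$ at $\id_{F(C)}$ shows that $\{h\mid F(h)\in V\}$ is a $J$-cover of $C$.
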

For use in Section~\ref{sec:comparison_morphisms} we recall the
following basic result about comorphisms of sites, which follows from
\cite[C2.3.18]{Elephant}:
\begin{lemma}\label{lem:comorphism_as}
  If $Q \in \Psh(\D)$ is a presheaf, and $F : \C \to \D$ a comorphism
  of sites, then the natural map
  $\as F^*(\eta_Q) : \as F^*(Q) \to \as F^*(\as Q)$ is an isomorphism
  in $\Sh(\C)$.
\end{lemma}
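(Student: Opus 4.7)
The plan is to combine the Yoneda lemma with the chain of adjunctions $\as\dashv\incl$ and $F^*\dashv F_*$, using the fact (Theorem~\ref{thm:comorphism}\ref{thm:comorphism-iii}) that $F$ being a comorphism makes $F_*$ send sheaves to sheaves. Concretely, I would show that $\as F^*(\eta_Q)$ induces a bijection on Hom-sets into every sheaf on $\C$, whence by Yoneda it is an isomorphism.

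The core calculation: for any $S\in\Sh(\C)$, the presheaf $F_*(\incl S)$ on $\D$ is a sheaf by Theorem~\ref{thm:comorphism}\ref{thm:comorphism-iii}. Hence by the universal property of the sheafification unit $\eta_Q$, precomposition with $\eta_Q$ yields a natural bijection
\[
\Hom_{\Psh(\D)}(\as Q,\,F_*(\incl S)) \xrightarrow{\;\cong\;} \Hom_{\Psh(\D)}(Q,\,F_*(\incl S)).
\]
Chaining the two adjunctions on both sides converts this into
\[
\Hom_{\Sh(\C)}(\as F^*(\as Q),\,S) \;\cong\; \Hom_{\Sh(\C)}(\as F^*(Q),\,S),
\]
naturally in $S$. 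The Yoneda lemma then produces a unique isomorphism $\as F^*(Q)\xrightarrow{\cong}\as F^*(\as Q)$ in $\Sh(\C)$.

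The final step, which I expect to be the main (modest) obstacle, is the bookkeeping verification that the isomorphism so obtained is exactly $\as F^*(\eta_Q)$, and not merely some canonical iso with the same source and target. This follows from naturality of the unit and counit of both adjunctions: the natural transformation in $S$ displayed above is, by construction, precomposition with $\eta_Q$ on the outer $\Psh(\D)$-level, which under the adjunction chain corresponds to precomposition with $\as F^*(\eta_Q)$ on the inner $\Sh(\C)$-level. Since the Yoneda embedding is fully faithful, the unique morphism inducing the displayed natural iso must be $\as F^*(\eta_Q)$, completing the proof.
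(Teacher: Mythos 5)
Your argument is correct. For the record, the paper does not prove this lemma at all: it simply asserts that it ``follows from [C2.3.18] of the Elephant,'' so your proposal supplies a self-contained proof where the paper only gives a citation. The route you take is the natural one and is essentially the argument underlying the cited result: the only non-formal input is Theorem~\ref{thm:comorphism}\ref{thm:comorphism-iii} (that $F_*$ preserves sheaves, which is where the comorphism hypothesis enters), after which everything is adjunction bookkeeping. Your chain of bijections
\[
\Hom_{\Sh(\C)}(\as F^*(\as Q),S)\cong\Hom_{\Psh(\D)}(\incl\as Q,F_*(\incl S))\cong\Hom_{\Psh(\D)}(Q,F_*(\incl S))\cong\Hom_{\Sh(\C)}(\as F^*(Q),S)
\]
is valid precisely because $F_*(\incl S)$ is a sheaf, so that precomposition with the reflection unit $\eta_Q$ is a bijection into it; and the final identification of the induced isomorphism with $\as F^*(\eta_Q)$ does follow, as you say, from naturality of the two hom-set bijections in their contravariant variable (precomposition with $\eta_Q$ transposes to precomposition with $F^*(\eta_Q)$ and then with $\as F^*(\eta_Q)$), plus the standard two-sided-inverse extraction from a natural bijection of representable functors. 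Two cosmetic points: you should write $\incl_{\D}\as Q$ rather than $\as Q$ in the first displayed Hom-set, since that Hom is taken in $\Psh(\D)$; and the Yoneda step you invoke is really the ``corepresentable'' version (a map inducing natural bijections on Hom-sets \emph{out of} its codomain is invertible), which is constructively unproblematic. Neither affects correctness.
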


Sometimes a functor $F : \C \to \D$ induces a geometric morphism
$\Sh(\D) \to \Sh(\C)$ in a \emph{contravariant} way via the
adjoint pair $(F_! \dashv F^*)$. In this case, the left exactness of
$F_!$ is not automatic, which is one reason the corresponding
definition has two components:

\begin{definition}\label{def:site-morphism}
  A functor $F : \C \to \D$ between sites is a \emph{morphism of
    sites} if it satisfies the following two properties:
  \begin{description}
  \item[Cover preserving] If $U \covs C$ in $\C$, then there is a
    cover $V \covs FC$ refining $F(U)$.
  \item[Covering-flat] If $A : I \to \C$ is a finite diagram in $\C$,
    then every cone over $FA$ factors locally through the $F$-image of
    a cone over $A$.
  \end{description}
\end{definition}

The notion of being covering-flat can be slightly strengthened by
removing the word ``locally'' from the definition. A functor
satisfying this condition is called \emph{representably flat}. Of
course, if $\C$ and $\D$ have finite limits and $F$ preserves them,
then $F$ is representably flat, and a fortiori covering-flat.
See also the discussion in \cite{Shulman2012}, where we find:

\begin{theorem}\label{thm:site-morphism}
  If $F : \C \to \D$ is a morphism of sites, then there is geometric
  morphism $\Sh(\D) \to \Sh(\C)$ whose direct image functor is the
  restriction of $F^* : \Psh(\D) \to \Psh(\C)$ to sheaves.
\end{theorem}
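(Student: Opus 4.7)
The plan is to exhibit the geometric morphism $G:\Sh(\D)\to\Sh(\C)$ with direct image $G_* := F^*\restriction_{\Sh(\D)}$ and inverse image $G^* := \as \circ F_! \circ \incl:\Sh(\C)\to\Sh(\D)$, then verify the three things this requires: that $G_*$ is well-defined (i.e., that $F^*$ preserves sheaves), that $G^*\dashv G_*$, and that $G^*$ preserves finite limits.

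First I would check that $F^*Q\in\Sh(\C)$ whenever $Q\in\Sh(\D)$. Given a cover $U = \{f_i:C_i\to C\}\covs C$ in $\C$ and a compatible family $(s_i)$ for $U$ in $F^*Q$ (so $s_i\in Q(FC_i)$), cover-preservation supplies $V\covs FC$ refining $F(U)$. Each $g:D\to FC$ in $V$ factors as $g = Ff_i\cdot k$ for some $i$ and some $k:D\to FC_i$, and one sets $t_g := s_i k \in Q(D)$. Well-definedness (independence of the chosen factorization) and compatibility of $(t_g)_{g\in V}$ in $Q$ use covering-flatness: any two such factorizations fit into a cone over the relevant finite diagram in $\D$, which covering-flatness factors locally through an $F$-image cone in $\C$, and the compatibility of $(s_i)$ in $\C$ then forces agreement. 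Sheafness of $Q$ gives a unique gluing $s\in Q(FC) = (F^*Q)(C)$, and a separate check shows $sf_i = s_i$, so $F^*Q$ is a sheaf.

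Second, the adjunction $G^*\dashv G_*$ is straightforward bookkeeping. For $P\in\Sh(\C)$ and $Q\in\Sh(\D)$, one stacks the sheafification adjunctions with $F_!\dashv F^*$:
\[
  \Hom_{\Sh(\D)}(\as F_!\incl P,Q) \cong \Hom_{\Psh(\D)}(F_!\incl P,\incl Q) \cong \Hom_{\Psh(\C)}(\incl P,F^*\incl Q) \cong \Hom_{\Sh(\C)}(P,F^*Q),
\]
where the last step uses that $F^*\incl Q = \incl(F^*Q)$ by the previous paragraph.

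The main obstacle is showing that $G^*$ preserves finite limits. Sheafification preserves finite limits, so the question reduces to whether $\as\circ F_!$ preserves finite limits of sheaves (viewed as presheaves). The functor $F_!$ alone preserves finite limits precisely under representable flatness, which is strictly stronger than covering-flatness; so $F_!$ need not preserve them, and one has to exploit the subsequent sheafification. Using the pointwise Kan-extension formula, elements of $F_!(\incl P)(D)$ are equivalence classes $[C,g:D\to FC,x\in P(C)]$, and finite limit preservation amounts to showing that a compatible tuple of such classes assembles, locally on a cover, into a single such class indexed by a cone over the diagram in $\C$. This is exactly the statement furnished by covering-flatness: any cone over $FA$ for a finite diagram $A$ in $\C$ factors locally through the $F$-image of a cone over $A$, and these local factorizations become honest equalities in the associated sheaf. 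A careful execution of this argument, following Shulman's treatment, then yields left exactness of $G^*$ and completes the construction.
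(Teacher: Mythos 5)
The paper does not actually prove this theorem---it is recalled from Shulman's paper \cite{Shulman2012} with a bare citation---and your outline is precisely the standard construction found there: direct image $F^*$ restricted to sheaves (with cover-preservation supplying the refining cover and covering-flatness making the transported family well-defined and compatible), inverse image $\as\circ F_!\circ\incl$ with the adjunction obtained by stacking $\as\dashv\incl$ and $F_!\dashv F^*$ together with the fact that $F^*$ preserves sheaves, and left exactness of the inverse image as the genuine content of covering-flatness. The one step you leave unexecuted, the left exactness of $\as F_!$ on sheaves, is the only part with real mathematical depth, but you correctly identify it as exactly the statement covering-flatness is designed to deliver and defer to Shulman for its execution, which matches the level of detail the paper itself offers.
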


\subsection{Structures for a signature}\label{sec:structures}

We fix a first-order signature $\Sigma$ throughout the sequel. We give
all definitions for the case of a single-sorted signature, leaving the
details concerning the many-sorted case to the reader.

\begin{definition}
  A \emph{structure} for $\Sigma$ in a sheaf topos $\Sh(\C,J)$
  consists of a sheaf
  $M$ (the carrier) together with morphisms $\denote f_M : M^n\to M$ for every
  $n$-ary function symbol, and subobjects of $M^n$ denoted
  $\denote P_M \hookrightarrow M^n$ for every $n$-ary predicate symbol.
  The latter two are often left implicit when we speak about a structure
  or model $M$.

  A \emph{homomorphism} of $\Sigma$-structures is a morphisms $g : M
  \to M'$ that respects the interpretations of function and predicate
  symbols.
\end{definition}

Any formula $\phi(\vec x)$ with free variables among
the $\vec x=x_1,\dots,x_n$, gives rise to a subobject
$\denote{\phi(\vec x)}_M \hookrightarrow M^n$, defined by recursion
on the structure of $\phi$ using the structure ingredients in the
base cases. This semantics has an explicit description in terms of the
\emph{forcing} relation:

\begin{definition}\label{def:forcing_structure}
  Given an object $C\in\C$ and a tuple of elements $\alpha\in
  M(C)^n = M^n(C)$. If $\phi(\vec x)$ is a formula with $n$ free variables,
  then we say that $C$ \emph{forces} $\phi(\alpha)$,
  denoted $C \forces_M \phi(\alpha)$, if{f} $\alpha$ factors through
  $\denote{\phi(\vec x)} \hookrightarrow M^n$, where we consider
  $\alpha$ as a map $\as\yo C \to M^n$.
\end{definition}

The forcing relation enjoys the following two properties:
\begin{description}
\item[Monotonicity] If $C \forces_M \phi(\alpha)$ and $f:D\to C$,
  then $D \forces_M \phi(\alpha f)$.
\item[Local Character] If $U\covs C$ is such that $C' \forces_M
  \phi(\alpha f)$ for every $f:C'\to C$ in $U$, then $C \forces_M
  \phi(\alpha)$.
\end{description}
We now have the following result, which is a slight variation of
\cite[Theorem~VI.7.1]{MacLaneMoerdijk1994}, using the fact that any
cover for a topology generated by a saturated coverage is refined by a
cover in the coverage (by definition).

\begin{theorem}\label{thm:forcing}
  Given a structure $M$ in a sheaf topos $\Sh(\C,\covd)$, where
  $\covd$ is a saturated coverage on $\C$, we have:
  \begin{itemize}
  \item $C\forces_M \top$ always;
  \item $C\forces_M \bot$ if{f} there is a cover $U\covs C$ with
    $U$ empty;
  \item $C\forces_M \bigwedge_{i\in I} \phi_i(\alpha)$ if{f}
    $C\forces_M\phi_i(\alpha)$ for all $i\in I$;
  \item $C\forces_M \bigvee_{i\in I} \phi_i(\alpha)$ if{f} there is
    a cover $U\covs C$ such that for every $f:C'\to C$ in $U$,
    there is some $i\in I$ such that $C'\forces_M \phi_i(\alpha f)$;
  \item $C\forces_M \phi(\alpha) \to \psi(\alpha)$ if{f} for
    every $f:C'\to C$ we have $C'\forces_M\psi(\alpha f)$ if
    $C'\forces_M\phi(\alpha f)$;
  \item $C\forces_M \forall x.\phi(x,\alpha)$ if{f} for every
    $f:C'\to C$ and every $\beta\in M(C')$,
    $C'\forces_M\phi(\beta,\alpha f)$;
  \item $C\forces_M \exists x.\phi(x,\alpha)$ if{f} there is a
    cover $U\covs C$ such that for every $f:C'\to C$ in $U$, there is
    some $\beta\in M(C')$ such that $C'\forces_M\phi(\beta,\alpha
    f)$.
  \end{itemize}
\end{theorem}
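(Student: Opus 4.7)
The plan is to proceed by induction on the formula $\phi$, in each case unfolding the definition $C\forces_M \phi(\alpha)$ as ``$\alpha : \as\yo C \to M^n$ factors through $\denote{\phi}_M \hookrightarrow M^n$'' using the standard categorical interpretation of the connectives: $\wedge$ and $\bigwedge$ as intersections of subobjects, $\vee$, $\bigvee$, and $\exists$ as joins / images (formed in $\Sh(\C,\covd)$, i.e., after sheafification), $\to$ as Heyting implication, $\forall$ as the right adjoint to pullback along a projection, and $\bot$ as the initial subobject. The base cases for atomic formulas and $\top$ are immediate from the definition.

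For the cases $\wedge$ and $\forall$, the characterizations follow directly from the universal properties of products and the right adjoint to $\pi^*$ (for $\pi:M^{n+1}\to M^n$), combined with Yoneda; no covering is involved. For $\to$, I would use that $\denote{\phi\to\psi}_M$ classifies those generalized elements mapping $\denote\phi$-elements to $\denote\psi$-elements, together with the fact that $\as\yo$ preserves the relevant colimits, so that it suffices to test on representables. The cases involving covers, namely $\bot$, $\bigvee$, and $\exists$, are the substantive ones: here $\denote{\phi}_M$ is obtained by sheafifying an obvious subpresheaf $P\subseteq M^n$, and the content of the clause is that a factorization $\alpha :\as\yo C \to \as P \hookrightarrow M^n$ exists precisely when there is a cover $U\covs C$ over which $\alpha$ lifts pointwise to $P$. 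This is essentially the explicit description of $\as P(C)$ in terms of locally compatible families given in the preliminaries, together with the observation that since $\alpha$ already lies in the sheaf $M^n$, the surrounding equivalence on locally compatible families collapses and we only need a single cover witnessing the local lifting.

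The main obstacle is this last point, which is sometimes glossed over: translating the categorical notion ``factors through the sheafification of a subpresheaf'' into the pointwise ``there exists a cover $U\covs C$ such that \ldots'' formulation of the theorem. Two facts make this work. First, by Lemma~\ref{lem:Gcommonrefinement} together with \ref{def:Gbasis-iii}, any cover in the Grothendieck topology generated by $\covd$ is refined by one in $\covd$, so it is sufficient to quantify over $\covs$-covers. Second, because $M^n$ is itself a sheaf, the monomorphism $\as P \hookrightarrow M^n$ is simply the image of $P\to M^n$, and one can verify that $\alpha \in \as P(C)$ if and only if there is $U\covs C$ with $\alpha f \in P(C')$ for each $f:C'\to C$ in $U$, without having to mod out by further local equivalence. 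Applying this observation with $P$ the appropriate subpresheaf yields the $\bot$, $\bigvee$, and $\exists$ clauses.

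Finally, Monotonicity and Local Character used implicitly throughout follow from the fact that $\denote{\phi}_M$ is a subobject (hence stable under restriction) and a sheaf (hence satisfies local character), so the inductive argument goes through coherently for all the clauses simultaneously.
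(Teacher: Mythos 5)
Your proof is correct and follows essentially the same route as the paper, which gives no independent argument but simply cites Theorem~VI.7.1 of Mac~Lane--Moerdijk together with the observation that every cover for the generated topology is refined by a cover in the saturated coverage. One minor remark: that refinement fact holds by definition of the generated topology (for every $S\in J(C)$ there exists $U\covs C$ with $U\subseteq S$), so your appeal to Lemma~\ref{lem:Gcommonrefinement} and condition \ref{def:Gbasis-iii} at that point is unnecessary, though harmless.
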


For a sentence $\phi$, we write $\forces_M \phi$ if
$C\forces_M \phi$ for all $C\in\C$. Of course, if $\C$ has a
terminal object $1$, then this is equivalent to
$1 \forces_M \phi$.
\begin{definition}
  Given a theory $T$ and a structure $M$ in $\Sh(\C,\covd)$, we say
  that $M$ is a \emph{model} of $T$ if $\forces_M \phi$ for every
  sentence $\phi$ in $T$.
\end{definition}

It is known that the forcing semantics is sound
\cite[D1.3.2]{Elephant} and complete \cite[D1.4.11]{Elephant} for
first-order intuitionistic logic (since sheaf toposes are Heyting
categories). In fact, one can construct a single sheaf topos with a
model $M$ of the first-order theory $T$ such that $\forces_M\phi$
if{f} $\proves_T\phi$ for all sentences $\phi$
\cite[Corollary~5.6]{Palmgren1997}. We give a similar completeness
proof for coherent logic below in
Theorem~\ref{thm:completeness}.

For future use we record the following refinements of the quantifier
cases of Theorem~\ref{thm:forcing}.
\begin{lemma}\label{lem:forcing-quant}
  If $M=\as P$ for some presheaf $P$, then we have
  for every $\alpha : \as\yo C \to M^n$ :
  \begin{itemize}
  \item $C\forces_M \forall x.\phi(x,\alpha)$ if{f} for every
    $f:C'\to C$ and every $\beta\in P(C')$,
    $C'\forces_M\phi(\eta_P(\beta),\alpha f)$;
  \item $C\forces_M \exists x.\phi(x,\alpha)$ if{f} there is a
    cover $U\covs C$ such that for every $f:C'\to C$ in $U$, there is
    some $\beta\in P(C')$ such that
    $C'\forces_M\phi(\eta_P(\beta),\alpha f)$.
  \end{itemize}
  Furthermore, if $P=\yo D$ is representable and $\C$ has finite
  products, then:
  \begin{itemize}
  \item $C\forces_M \forall x.\phi(x,\alpha)$ if{f}
    $C\times D\forces_M\phi(\pi_2,\alpha\pi_1)$,
    where $\pi_2\in\as\yo D(C\times D)$ is the element corresponding
    to the projection $C\times D\to D$.
    Also, $\alpha\pi_1 : \as\yo (C\times D)\to M^n$.
  \end{itemize}
\end{lemma}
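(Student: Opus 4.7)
The plan is to derive each bullet from the corresponding clause of Theorem~\ref{thm:forcing} by exploiting that elements of $M(C') = \as P(C')$ are, locally, of the form $\eta_P(\beta)$ for some $\beta \in P$. The $(\Rightarrow)$ directions of the first two bullets are immediate: one simply specializes the witness in Theorem~\ref{thm:forcing} to an element of the form $\eta_P(\beta)$ with $\beta \in P(C')$.

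For the $(\Leftarrow)$ direction of the $\forall$ bullet, I would pick an arbitrary $f : C' \to C$ and an arbitrary $\gamma \in M(C')$. By the description of $\as P$ recalled after Definition~\ref{def:Gsheaf}, $\gamma$ is represented by a locally compatible family $(\beta_i)$ for some cover $U \covs C'$, with $\beta_i \in P(C_i)$ for each $g_i : C_i \to C'$ in $U$, and crucially $\gamma g_i = \eta_P(\beta_i)$ in $M(C_i)$. The hypothesis then yields $C_i \forces_M \phi(\gamma g_i, \alpha f g_i)$ for every $i$, and local character gives $C' \forces_M \phi(\gamma, \alpha f)$, from which Theorem~\ref{thm:forcing} concludes. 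The $(\Leftarrow)$ direction of the $\exists$ bullet is easier: the cover $U \covs C$ supplied by the hypothesis, together with the elements $\eta_P(\beta) \in M(C')$, is exactly the data that Theorem~\ref{thm:forcing} requires for the existential.

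For the representable case, the $(\Rightarrow)$ direction follows by specializing the first bullet to the morphism $\pi_1 : C \times D \to C$ and the element $\pi_2 \in \yo D(C \times D) = P(C \times D)$, noting that $\eta_P(\pi_2)$ is precisely the element called $\pi_2 \in M(C \times D)$ in the statement. For $(\Leftarrow)$, given $f : C' \to C$ and $\beta \in \Hom(C', D) = P(C')$, the pairing $\langle f, \beta\rangle : C' \to C \times D$ satisfies $\pi_1 \langle f, \beta\rangle = f$ and $\pi_2 \langle f, \beta\rangle = \beta$. Applying monotonicity to the hypothesis along $\langle f, \beta\rangle$, and using naturality of $\eta_P$ to rewrite the restriction of $\eta_P(\pi_2)$ along $\langle f,\beta\rangle$ as $\eta_P(\beta)$, we obtain $C' \forces_M \phi(\eta_P(\beta), \alpha f)$; the first bullet then closes the argument.

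The main subtlety throughout is the identification $\gamma g_i = \eta_P(\beta_i)$ in the $\forall$ case above, which requires unpacking the construction of $\as P$ and of the unit $\eta_P$; the rest of the argument is then a routine combination of monotonicity, local character, and naturality.
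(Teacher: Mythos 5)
Your handling of the $\forall$ bullet and of the representable case is correct and in line with the paper: the paper's own proof simply observes that the first two points ``follow from local character of forcing, together with the fact that every element of $\as P(C')$ is locally of the form $\eta_P(\gamma)$'', and for the product formula it cites Remark~VI.7.2 of Mac~Lane--Moerdijk. Your explicit argument for the representable case via $\langle f,\beta\rangle$ and naturality of $\eta_P$ is a perfectly good self-contained replacement for that citation.

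There is, however, a genuine gap in the $(\Rightarrow)$ direction of the existential bullet, which you dismiss as ``immediate'' by ``specializing the witness \dots\ to an element of the form $\eta_P(\beta)$''. That is not a specialization. Theorem~\ref{thm:forcing} hands you a cover $U\covs C$ and, for each $f:C'\to C$ in $U$, some arbitrary $\beta\in M(C')=\as P(C')$; such a $\beta$ need not lie in the image of $\eta_P$ at $C'$ itself --- it is only \emph{locally} of that form. (For the $\forall$ bullet the $(\Rightarrow)$ direction really is a specialization, because there the witness is universally quantified; for $\exists$ it is existentially quantified, so you are being asked to \emph{produce} a witness of the special form, which is a strengthening.) The repair is exactly the argument you already use for the $(\Leftarrow)$ direction of $\forall$: for each $f$ choose a cover $V_f\covs C'$ on which $\beta$ restricts to elements $\eta_P(\gamma_g)\in M(C'')$ with $\gamma_g\in P(C'')$, use monotonicity to force $\phi(\eta_P(\gamma_g),\alpha fg)$ on the pieces, and then assemble $\bigcup_f fV_f$ into a single cover of $C$ by condition~\ref{def:Gbasis-iii} of Definition~\ref{def:Gbasis}. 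This is precisely the ingredient the paper flags with its parenthetical ``(and transitivity of covers in the case of the existential quantifier)'' and the one your write-up omits; the rest of your proof is fine.
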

\begin{proof}
  The first points follow from local character of forcing, together
  with the fact that every element of $\as P(C')$ is locally of the
  form $\eta_P(\gamma)$ for some $\gamma\in P(C'')$ (and transitivity
  of covers in the case of the existential quantifier).

  The last point is Remark~VI.7.2 in \cite{MacLaneMoerdijk1994}.
\end{proof}

If $f : \mathcal E \to \mathcal F$ is a geometric morphism of sheaf
toposes, and $M\in\mathcal F$ is a model of a coherent theory $T$,
then $f^*M$ is a model of $T$ in $\mathcal E$. This illustrates the
important relationship between coherent (and more generally,
geometric) theories and toposes and geometric morphisms. We shall
return to this relationship in the next subsection and in
Section~\ref{sec:comparisons}.

\subsection{Generalized geometric implications}
\label{sec:generalized-geometric}

The following variation on the class of geometric implications will be
used in our completeness proofs in
Section~\ref{sec:coherent_completeness}.

\begin{definition}\label{def:generalized-geometric}
  The class of \emph{generalized geometric implications} is generated by
  the following grammar:
  \[
    \phi ::= \alpha \mid (\phi_1 \land \phi_2)
    \mid \bigl(\bigvee\nolimits_{i\in I} \phi_i\bigr) \mid (\exists x.\phi)
    \mid (\forall x.\phi)
    \mid (\alpha \to \phi)
  \]
  where $I$ ranges over small index sets and
  $\alpha ::= p(\vec t\;) \mid \top \mid \bot \mid (s=t)$ is an atomic formula.
\end{definition}

Note that geometric implications are not strictly speaking generalized
geometric implications, but easy equivalents can be obtained via
the equivalence of $\alpha_1 \land \cdots \land \alpha_n \to \phi$ and
$\alpha_1 \to \cdots \to \alpha_n \to \phi$.

Also note that there are (even finitary) generalized geometric implications that are not
equivalent to any geometric implication, e.g., $p \lor (p \to \bot)$
for atomic $p$.

Among first-order formulas, the generalized geometric implications are
up to equivalence those that do not contain negative occurrences of
implications nor of universal quantifiers,
cf.~\cite[Theorem~3.2]{Negri2014} where the term
\emph{generalized geometric implications} refers to first-order
formulas.

\begin{theorem}\label{thm:generalized-preserved}
  Let $F:\mathcal E\to\mathcal F$ be a geometric morphism, and
  $M \in \mathcal F$ a $\Sigma$-structure. For any generalized
  geometric implication $\phi$ in variables $x_1,\dots,x_n$, we
  have an inclusion of subobjects of $(F^*M)^n
  \simeq F^*(M^n)$:
  \[
    F^* \denote\phi_M \le \denote\phi_{F^*M}.
  \]
  In particular, if $\phi$ is a sentence and $\denote\phi_M=\top$,
  then $\denote\phi_{F^*M}=\top$.
\end{theorem}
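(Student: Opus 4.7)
The plan is to proceed by structural induction on $\phi$, exploiting that $F^*$, being an inverse image functor, is a lex left adjoint: it preserves finite limits (so finite meets and pullbacks of subobjects), arbitrary colimits (so small joins of subobjects, computed as images of coproduct inclusions), and regular epi--mono factorisations (so images, hence $\exists_\pi$ along any map $\pi$); it also sends the interpretations of function and predicate symbols in $M$ to the corresponding interpretations in $F^*M$. Consequently every positive piece of the grammar---atomic formulae (including $s=t$), $\top$, $\bot$, $\wedge$, $\bigvee$, and $\exists$---is preserved \emph{on the nose}, i.e., $F^*\denote{\phi}_M=\denote{\phi}_{F^*M}$. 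In particular, for atomic $\alpha$ we have an \emph{equality} of subobjects $F^*\denote{\alpha}_M=\denote{\alpha}_{F^*M}$, and this exactness is what will carry us through the two difficult clauses.

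For universal quantification, write $\denote{\forall x.\phi}_M=\forall_{\pi}\denote{\phi}_M$ where $\pi\colon M^{n+1}\to M^n$ is the projection. Applying $F^*$ to the counit $\pi^*\forall_{\pi}\denote{\phi}_M\le\denote{\phi}_M$ in $\mathcal{F}$, and using that $F^*$ commutes with pullbacks and with products so that $F^*\pi^*=\pi^*F^*$, yields $\pi^*F^*\forall_{\pi}\denote{\phi}_M\le F^*\denote{\phi}_M$. By the inductive hypothesis the right-hand side is bounded by $\denote{\phi}_{F^*M}$, and transposing through the adjunction $\pi^*\dashv\forall_{\pi}$ in $\mathcal{E}$ gives $F^*\denote{\forall x.\phi}_M\le\forall_{\pi}\denote{\phi}_{F^*M}=\denote{\forall x.\phi}_{F^*M}$, as required.

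For an implication $\alpha\to\phi$ with $\alpha$ atomic, I will start from the Heyting counit $(\denote{\alpha}_M\Rightarrow\denote{\phi}_M)\wedge\denote{\alpha}_M\le\denote{\phi}_M$; applying $F^*$ and using preservation of finite meets produces $F^*\denote{\alpha\to\phi}_M\wedge F^*\denote{\alpha}_M\le F^*\denote{\phi}_M$. Here I use both the \emph{equality} $F^*\denote{\alpha}_M=\denote{\alpha}_{F^*M}$ coming from atomicity and the inductive inequality $F^*\denote{\phi}_M\le\denote{\phi}_{F^*M}$ to rewrite this as $F^*\denote{\alpha\to\phi}_M\wedge\denote{\alpha}_{F^*M}\le\denote{\phi}_{F^*M}$, whereupon the Heyting adjunction in $\mathcal{E}$ delivers the desired inclusion. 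The main obstacle is precisely these two cases: a lex left adjoint does not in general commute with $\forall_{\pi}$ or with Heyting implication, and this is exactly why the grammar of Definition~\ref{def:generalized-geometric} forbids arbitrary implications and negative occurrences of universals---atomicity of the premise $\alpha$ is used essentially to convert one of the two needed inequalities into an equality. The final clause, for a sentence $\phi$ with $\denote{\phi}_M=\top$, then follows because $F^*$ preserves terminal objects, so $\denote{\phi}_{F^*M}\ge F^*\top=\top$.
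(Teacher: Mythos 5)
Your proof is correct and follows essentially the same route as the paper's: induction on $\phi$, exact preservation of the positive fragment by the lex left adjoint $F^*$, the lax inequality $F^*\forall_\pi \le \forall_\pi F^*$ for universal quantification, and the crucial use of atomicity of the antecedent $\alpha$ to get an \emph{equality} $F^*\denote{\alpha}_M=\denote{\alpha}_{F^*M}$ in the implication case. The only (immaterial) difference is that you derive the two lax-preservation inequalities directly by transposing counits through the adjunctions, whereas the paper cites the $\forall$ inequality from Mac Lane--Moerdijk and reduces Heyting implication to it via $(A\Rightarrow B)=\forall_a(A\land B)$.
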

\begin{proof}
  By induction on $\phi$. The inverse image functor preserves atomic
  formulas, conjunction, disjunction, and existential
  quantification. Hence it remains to consider implication and
  universal quantification.

  Note that if $f : P \to Q$ is any morphism in $\mathcal F$ and $A$ a
  subobject of $P$, then $F^*(\forall_f A)\le \forall_{F^*f}(F^*A)$,
  cf.~\cite[IX.6.(17)]{MacLaneMoerdijk1994}. And if $B$ is another
  subobject of $P$, then implication can be expressed as $(A
  \Rightarrow B) = \forall_a(A\land B)$, where $a : A \to P$ is the
  inclusion of the subobject $A$. Hence $F^*(A \Rightarrow B) =
  F^*(\forall_a.A\land B) \le \forall_{F^*a}(F^*A \land F^*B)
  = (F^*A \Rightarrow F^*B)$.
  
  For an implication $\alpha \to \phi$, we now have
  \begin{align*}
    F^* \denote{\alpha\to\phi}_M
    &= F^*(\denote\alpha_M \Rightarrow \denote\phi_M)
    \le (F^*\denote\alpha_M \Rightarrow F^*\denote\phi_M) \\
    &= (\denote\alpha_{F^*M} \Rightarrow F^*\denote\phi_M)
    \le (\denote\alpha_{F^*M} \Rightarrow \denote\phi_{F^*M})
    = \denote{\alpha\to\phi}_{F^*M},
  \end{align*}
  using the fact that atomic formulas are preserved.
  The case for a universal quantifier is similar.  
\end{proof}

\section{Categories of forcing conditions}\label{sec:basics}

In this section we define three categories of forcing conditions
and do the groundwork for their sheaf toposes based on a coherent
theory $T$ without equality.
We assume a fixed but arbitrary signature $\Sigma$ of a first-order language.
Phrases like \emph{term, atom, formula, sentence} refer to the well known
syntactic entities.
Let $\XX=\set{x_0,x_1,\ldots}$ be a countably infinite set of variables;
metavariables $x,y,z$ range over $\XX$.
Let $\Tm(X)$ denote the set of $\Sigma$-terms over $X \subseteq \XX$.

\begin{definition}\label{def:Ctermsubst}
The category $\Cts$ has objects denoted as pairs $(X;A)$,
where $X$ is a finite subset of $\XX$ and $A$ is a finite set
of atoms in the language defined by $\Sigma$ and $X$.
The latter means that only variables from $X$ may occur in $A$.
Such pairs $(X;A)$ are called \emph{conditions}.
The morphisms of $\Cts$ are denoted as $f : (Y;B) \to (X;A)$,
where $f$ is a \emph{term substitution} $X \to \Tm(Y)$ such that $Af \subseteq B$.
Here and below, $Af$ denotes the application of the
substitution $f$ to $A$.
\end{definition}

Clearly, $\Cts$ is a category. We have  $\id_{(X;A)} :  (X;A) \to (X;A)$
with underlying function the inclusion $X \to \Tm(X)$.
If $f: (Z;C) \to (Y;B)$ and $g : (Y;B) \to (X;A)$,
then $g\circ f : (Z;C) \to (X;A)$ is defined by the
composition of substitutions $gf : X \to \Tm(Z)$ which sends a
variable $x\in X$ to the term $xgf \in \Tm(Z)$.
Here and below we use diagram composition for the substitutions
underlying the morphisms.
This works well in combination with postfixing substitutions:
$A(g \circ f) = A(gf) = (Ag)f$, where
one can read $gf$ both as the ordinary composition of morphisms and 
as the diagram composition of the underlying substitutions.

Conditions will be denoted as, e.g., $(x,y,z;p(z),q(f(x),z,z))$.
Substitutions will be denoted as, e.g., $[y:=x,z:=g(x)]$.
The category $\Cts$ has a terminal object $(;)$
with unique morphisms $[] : (X;A) \to (;)$.

\begin{definition}\label{def:Cvarsubst}
The category $\Cvs$ is the subcategory of $\Cts$ that
has the same objects but in which the morphisms $f : (Y;B) \to
(X;A)$ are required to be \emph{variable substitutions}.
These are simply functions $X \to Y$ thought of as functions $X \to \Tm(Y)$. 
\end{definition}

Note that if the signature is purely relational,
then the categories $\Cts$ and $\Cvs$ coincide.

\begin{definition}\label{def:Crenaming}
The category $\Crn$ is the subcategory of $\Cvs$ that
has the same objects but in which the morphisms are
required to be injective
(when considered as functions between sets of variables).
Such variable substitutions
are commonly called \emph{renamings}, whence the
subscripts in $\Cvs,\Crn$.
\end{definition}

\begin{discussion}
The above definitions clearly take the nominal approach to conditions.
In the nominal approach it is important to avoid name conflicts.
Also, the nominal approach gives rise to many isomorphisms,
such as between $(x_0;p(x_0))$ and $(x_1;p(x_1))$. Such isomorphisms
do no harm, but are not very useful either.
Alternatively, it would be possible to take $\XX=\nat$ and use
de~Bruijn indices~\cite{deBruijn1972}.
The two conditions above would then become one condition $(1;p(0))$.
In the approach with de Bruijn indices the conditions are
of the form $(n;A)$ with all numbers occurring in $A$ less than $n$.
Morphisms $(m;B) \to (n;A)$ are functions $f: [n] \to \Tm([m])$
such that $Af \subseteq B$.
Here $[k]$ denotes the set $\set{0,\ldots,k-1}$, for each $k\in\nat$.
With de Bruijn indices there are no name conflicts.
However, de Bruijn-indices are more difficult to read and we shall
only use them to resolve name conflicts.
\end{discussion}

\begin{discussion}
  In the above definitions we assumed the language to be
  single-sorted, and we shall write most of the paper under this
  assumption, though very little would have to change to accommodate
  multiple sorts. In this case, a condition $(X;A)$ would consist of a
  finite set $X$ of sorted variables, and a finite set of atoms with
  variables from $X$, and morphisms would be sort-respecting
  (injective, variable, term) substitutions.
\end{discussion}

\subsection{Finite limits}\label{sec:finite_limits}

Both $\Cvs$ and $\Crn$ inherit the terminal object $(;)$ from $\Cts$. In this section
we show that $\Cvs$ actually has all finite limits. We also note that
$\Cts$ has all finite products.
However, neither $\Cts$ nor $\Crn$ have equalizers: For $\Cts$,
consider the two arrows $[x:=0],[x:=1]:(;)\rightrightarrows(x;)$ over a signature
with constants $0,1$. They cannot by equalized.
For $\Crn$, consider the two arrows
$[x:=y],[x:=z] : (y,z;)\rightrightarrows(x;)$; they are equalized by the
non-injective variable substitution $[y:=w,z:=w]:(w;)\to(y,z;)$.

To see that $\Cvs$ has all finite limits, it suffices to check that in
addition to a terminal object it has pullbacks. So let
$(Y;B) \xrightarrow{f}{} (X;A) \xleftarrow{g}{} (Z;C)$ be a
cospan in $\Cvs$. Let $W := Y \sqcup^X Z$ be a pushout of the span
$Y \leftarrow X \to Z$ with corresponding map $i:Y\to W$ and
$j:Z\to W$. Note that $i$ and $j$ are injective if $f:X\to Y$ and
$g:X\to Z$ are. Let $D := Bi\sor Cj$. Then the following is a pullback
square of conditions:
\begin{equation}\label{eq:pbsquare}
  \begin{tikzcd}
    (W;D) \arrow{r}{j}\arrow{d}[swap]{i} &
    (Z;C)\arrow{d}{g} \\
    (Y;B) \arrow{r}[swap]{f} & (X;A)
  \end{tikzcd}
\end{equation}
Indeed, given any other span of conditions
$(Y;B) \xleftarrow{i'}{} (W';D') \xrightarrow{j'}{} (Z;C)$ such that
the analogous square commutes, there is a unique function $h:W\to W'$
factoring the corresponding functions, and this gives a morphism of
conditions $h : (W';D')\to(W;D)$.

To see that $\Cts$ has all finite products, consider the
square~\eqref{eq:pbsquare} in the case where $(X;A)$ is the terminal
object $(;)$ and $W$ is then the disjoint union of $X$ and $Y$.

\subsection{Coverages}\label{sec:coverages}

Recall that a coherent theory $T$ has an axiomatization in which every
axiom is of the form
\[\forall \vec x.\,(\phi_0 \to  \exists\vec x_1.\phi_1 \lor\cdots\lor
  \exists\vec x_k.\phi_k),\]
where the $\phi_i$ are conjunctions of atoms.
\begin{definition}\label{def:cov_T}
Let $T$ be a coherent theory. We define an inductively generated relation $\covT$
as in Definition~\ref{def:Gbasis} for the category $\Crn$.
Since $\Crn$ is a subcategory of $\Cts$ and $\Cvs$ containing all isomorphisms,
we have then also defined relations $\covT$ for $\Cts$ and $\Cvs$.
In Theorem~\ref{thm:cov_T} we will verify that $\covT$ is a saturated coverage
for all three categories, and even a basis for a Grothendieck topology on $\Cvs$.
\[
\frac{\text{$f$ is an isomorphism with codomain $(X;A)$}}
{(X;A)\covT\set{f}}
\]
\[
\frac{(X,\vec x_1; A,\phi_1) \covT U_1 \quad \ldots\quad (X,\vec x_n; A,\phi_n) \covT U_n}
{(X;A) \covT \bigcup_{1\leq i\leq n} (e_i U_i)}(*)
\]
Here $(*)$ is the set of conditions sanctioning the application of the
rule: the existence of an instance
$\phi_0 \to \exists\vec x_1. \phi_1 \lor\cdots\lor \exists\vec x_n. \phi_n$ 
of an axiom in $T$ with all free variables in $X$ and
$\phi_0 \subseteq A$.  We tacitly assume that name conflicts are
avoided, either by using de Bruijn indices, or by renaming the bound
variables $\vec x_i$ so that they are disjoint from $X$.  The
morphisms $e_i : (X,\vec x_i; A,\phi_i) \to (X;A)$ restrict to
identities on $X$, and $e_iU_i$ denotes the collection of morphisms
$e_if$ with $f\in U_i$. This completes $(*)$ and the definition of  $\covT$.
\end{definition}

\begin{lemma}\label{lem:iso_covT}
Let $T$ be a coherent theory and $\covT$ the relation as in Definition~\ref{def:cov_T}.
If $(X;A) \covT U$, then $(Y;B) \covT fU$ for every isomorphism $f: (X;A) \to (Y;B)$.
\end{lemma}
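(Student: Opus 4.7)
The plan is to proceed by induction on the derivation of $(X;A) \covT U$, using the two rules of Definition~\ref{def:cov_T}. For the base case, $U=\{g\}$ for some isomorphism $g$ with codomain $(X;A)$; then $fU=\{f\circ g\}$ and $f\circ g$ is an isomorphism with codomain $(Y;B)$, so the base rule gives $(Y;B)\covT fU$ directly.

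For the inductive step, $U = \bigcup_{i=1}^n e_i U_i$ arises from an axiom instance $\phi_0 \to \exists\vec x_1.\phi_1 \lor\cdots\lor \exists\vec x_n.\phi_n$ with $\phi_0 \subseteq A$ and derivations of $(X,\vec x_i; A,\phi_i) \covT U_i$. Since $f$ is an isomorphism in $\Crn$, the underlying renaming is a bijection $X\to Y$ and one checks easily that $Af=B$; in particular $\phi_0 f \subseteq B$, so the same axiom gives an instance sanctioning a step of coverage for $(Y;B)$ (after renaming the bound variables $\vec x_i$ to avoid any conflict with $Y$, invoking the tacit naming convention of Definition~\ref{def:cov_T}). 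Extend $f$ by the identity on each $\vec x_i$ to obtain isomorphisms
\[
  f_i : (X,\vec x_i; A,\phi_i) \to (Y,\vec x_i; B,\phi_i f),
\]
so by the inductive hypothesis $(Y,\vec x_i; B,\phi_i f) \covT f_i U_i$. Applying the second rule of Definition~\ref{def:cov_T} with the new axiom instance and these covers yields $(Y;B) \covT \bigcup_i e'_i f_i U_i$, where $e'_i : (Y,\vec x_i; B,\phi_i f)\to (Y;B)$ restricts to the identity on $Y$.

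It remains to verify that $e'_i \circ f_i = f \circ e_i$ as morphisms $(X,\vec x_i; A,\phi_i)\to (Y;B)$; this is a one-line check using diagram composition of the underlying substitutions, since both sides send $y\in Y$ to $yf \in X \subseteq X\sqcup \vec x_i$. Consequently $\bigcup_i e'_i f_i U_i = \bigcup_i f e_i U_i = fU$, completing the induction. The only real subtlety is the bookkeeping for name conflicts when extending $f$ over the $\vec x_i$; this is resolved by the standard renaming convention already built into Definition~\ref{def:cov_T}.
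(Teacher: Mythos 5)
Your proof is correct and follows essentially the same route as the paper's: induction on the derivation of $(X;A)\covT U$, with the base case handled by composition of isomorphisms, and the inductive step by transporting the axiom instance along $f$, extending $f$ by the identity on the bound variables to isomorphisms $f_i$, invoking the inductive hypothesis on the premises, and checking $f\circ e_i = e'_i\circ f_i$ so that the resulting cover equals $fU$. The only cosmetic quibble is the direction of the underlying substitution (for $f:(X;A)\to(Y;B)$ it is a map on variables $Y\to X$ in the paper's convention), but your final verification treats this correctly, so nothing is affected.
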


\begin{proof}
Since $T$ is fixed, we drop the subscript in $\covT$.
By induction on the definition of $(X;A) \covd U$.
The base case is trivial, since composition preserves isomorphisms.
For the induction step, consider
\[
\frac{(X,\vec x_1; A,\phi_1) \covd U_1 \quad \ldots\quad (X,\vec x_n; A,\phi_n) \covd U_n}
{(X;A) \covd \bigcup_{1\leq i\leq n} (e_i U_i)}(*)
\]
and assume the lemma holds for the premises of the rule.
Let $f: (X;A) \to (Y;B)$ be an iso. We prove the desired result by the following inference:
%induction step in the definition of $\covd$:
\[
\frac{(Y,\vec x_1; B,\phi_1 f_1) \covd f_1 U_1 \quad \ldots
\quad (Y,\vec x_n; B,\phi_n f_n) \covd f_n U_n}
{(Y;B) \covd \bigcup_{1\leq i\leq n} (e'_i f_i U_i)}(*')
\]
Here every $f_i: (X,\vec x_1;A,\phi_1) \to (Y,\vec x_1;B,\phi_1 f)$ is an isomorphism extending $f$
with identity substitutions for the $\vec x_i$. The $e'_i$ do for $Y$ what the 
$e_i$ do for $X$. The justification $(*')$ of the inference is based on
$(\phi_0 \to \exists\vec x_1. \phi_1 \lor\cdots\lor \exists\vec x_n. \phi_n)f$.
For example, $\phi_0 \subseteq A$ implies $\phi_0 f \subseteq Af \subseteq B$. 
We may waive name conflicts since we could have used de Bruijn indices.
It remains to prove $f e_i U_i = e'_i f_i U_i$ for all $i$. 
This follows since, basically, the $e_i$ do nothing on  $X$, 
the $e'_i$ do nothing on $Y$, and the $f_i$ extend $f$
(actually, we have $f e_i = e'_i f_i$).
\end{proof}

\begin{theorem}\label{thm:cov_T}
Let $T$ be a coherent theory and $\covT$ the relation as in Definition~\ref{def:cov_T}.
The relation $\covT$ is a saturated coverage for any of the categories $\Crn,\Cvs$,$\Cts$
and a basis for a Grothendieck topology on $\Cvs$.
\end{theorem}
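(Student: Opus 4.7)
The plan is to induct on the derivation of $(X;A) \covT U$ to establish uniformly conditions \ref{def:Gbasis-i} and \ref{def:Gbasis-iii} as well as the stability condition \ref{def:Gbasis-ii'} of a saturated coverage for all of $\Crn$, $\Cvs$, $\Cts$, and then to refine the stability argument to obtain the stronger basis condition \ref{def:Gbasis-ii} on $\Cvs$. The key observation threading through the induction is that any morphism $g : (Y;B) \to (X;A)$ extends to $g_i : (Y, \vec x_i; B, \phi_i g) \to (X, \vec x_i; A, \phi_i)$ by acting as the identity on the fresh variables $\vec x_i$, and that this extension stays in the relevant subcategory: injective whenever $g$ is, and a variable substitution whenever $g$ is.

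Condition \ref{def:Gbasis-i} is just the base rule of the inductive definition. For transitivity \ref{def:Gbasis-iii}, I induct on the derivation: the base case $U = \{h\}$ with $h$ iso uses Lemma~\ref{lem:iso_covT} to transport the given $V_h \covs \dom(h)$ along $h$ to a cover of $(X;A)$. In the inductive step, where $U = \bigcup_i e_i U_i$ is obtained from premises $(X, \vec x_i; A, \phi_i) \covT U_i$ and for each $u \in U_i$ we are given a cover $V_{e_i u}$ of $\dom(u)$, applying the induction hypothesis to the $i$th premise produces $(X, \vec x_i; A, \phi_i) \covT \bigcup_{u \in U_i} u V_{e_i u}$, and reapplying the axiom rule finishes.

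For stability \ref{def:Gbasis-ii'}, fix $g : (Y;B) \to (X;A)$ and induct on the derivation. In the base case with $h$ iso, set $V = \{\id_{(Y;B)}\}$; then $\{g\}$ refines $\{h\}$ via the factorization $g = h \circ (h^{-1} \circ g)$. In the inductive step, the inclusions $\phi_0 \subseteq A$ and $Ag \subseteq B$ give $\phi_0 g \subseteq B$, so the very same coherent axiom may be applied at $(Y;B)$, producing extensions $e'_i : (Y, \vec x_i; B, \phi_i g) \to (Y;B)$; the square $g \circ e'_i = e_i \circ g_i$ commutes by inspection of the underlying substitutions. Applying the induction hypothesis to the $i$th premise with the morphism $g_i$ yields $V_i \covs (Y, \vec x_i; B, \phi_i g)$ with $g_i V_i$ refining $U_i$, so the axiom rule delivers $(Y;B) \covT \bigcup_i e'_i V_i$, and this sink refines $U$ via $g$ by the commutativity just recorded.

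For the basis condition \ref{def:Gbasis-ii} on $\Cvs$, I strengthen the previous argument by observing that the explicit pullback construction of Section~\ref{sec:finite_limits} identifies $e'_i$ as $g^* e_i$: the pushout of variables $Y \leftarrow X \to X \cup \vec x_i$ is $Y \cup \vec x_i$, and the resulting atom set is $B \cup \phi_i g$ since $Ag \subseteq B$. By the standard pullback-of-a-composite identity, $g^*(e_i u) = e'_i \circ (g_i^* u)$ for every $u \in U_i$. Running the same induction with the hypothesis upgraded to condition \ref{def:Gbasis-ii} therefore produces exactly the pullback sink $g^* U$ as a $\covT$-cover of $(Y;B)$. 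The only real bookkeeping subtlety, easily dispatched by renaming or by passing to de Bruijn indices as suggested in the preceding discussion, is to keep the bound variables $\vec x_i$ disjoint from $Y$ so that all constructions remain in the intended category.
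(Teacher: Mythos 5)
Your proposal is correct and follows essentially the same route as the paper's proof: condition \ref{def:Gbasis-i} from the base rule, transitivity \ref{def:Gbasis-iii} by induction with Lemma~\ref{lem:iso_covT} in the base case and re-application of the axiom rule in the step, stability \ref{def:Gbasis-ii'} by pulling the axiom instance back along $g$ via the extensions $g_i$ and the commuting square $g e'_i = e_i g_i$, and the basis condition \ref{def:Gbasis-ii} on $\Cvs$ by upgrading that square to the explicit pullback of Section~\ref{sec:finite_limits}. The observation that $g_i$ remains injective (resp.\ a variable substitution) whenever $g$ is, which you make explicit, is exactly what makes the argument uniform across the three categories.
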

\begin{proof} Since $T$ is fixed, we drop the subscript in $\covT$.  We have
to verify conditions \ref{def:Gbasis-i}, \ref{def:Gbasis-ii'} and
\ref{def:Gbasis-iii} in Definition~\ref{def:Gbasis}.

Condition \ref{def:Gbasis-i} holds as per definition of the base case of $\covT$.

We do \ref{def:Gbasis-iii} before \ref{def:Gbasis-ii'} since
\ref{def:Gbasis-iii} is easier. We prove \ref{def:Gbasis-iii} by
induction on the definition of $C \covd U$. The base case, where
$U = \set{f}$ for some isomorphism $f$, is Lemma~\ref{lem:iso_covT},
which holds for all three $\Crn,\Cvs,\Cts$.
For the induction step, consider
\[
\frac{(X,\vec x_1; A,\phi_1) \covd U_1 \quad \ldots\quad (X,\vec x_n; A,\phi_n) \covd U_n}
{(X;A) \covd \bigcup_{1\leq i\leq n} (e_i U_i)}(*)
\]
and assume \ref{def:Gbasis-iii} holds for the premises of the rule.
Assume $C_{ij}\covd V_{ij}$ for all $g_{ij} : C_{ij} \to (X;A)$ in $e_i U_i$, $1\leq i\leq n$.
Let $f_{ij} \in U_i$, then $g_{ij}= e_i f_{ij} \in e_i U_i$, 
and we can use $C_{ij}\covd V_{ij}$ also for $f_{ij}$.
Hence by the induction hypothesis
$(X,\vec x_i; A,\phi_i) \covd  \bigcup_{f_{ij}\in U_i} {f_{ij} V_{ij}}$, $1\leq i\leq n$.
With these covers in hand we prove the desired result by the following inference:
\[
\frac{(X,\vec x_1; A,\phi_1) \covd  \bigcup_{f_{1j}\in U_1} f_{1j} V_{1j}
 \quad \ldots\quad (X,\vec x_n; A,\phi_n) \covd  \bigcup_{f_{nj}\in U_n} f_{in} V_{in}}
{(X;A) \covd \bigcup_{1\leq i\leq n} (e_i  \bigcup_{f_{ij}\in U_i} {f_{ij} V_{ij}} )}(*)
\]
A routine verification shows that indeed 
\[
\bigcup_{1\leq i\leq n} (e_i  \bigcup_{f_{ij}\in U_i} {f_{ij} V_{ij}} )
= \bigcup_{1\leq i\leq n} \bigcup_{g_{ij}\in e_i U_i} g_{ij} V_{ij},
\]
which completes the proof of \ref{def:Gbasis-iii}.

We continue with \ref{def:Gbasis-ii'}, which we again prove by induction 
on the definition of $C \covd U$. In the base case we have that
$C \covd \set{f}$ for some isomorphism $f : C' \to C$.
For any $g: D \to C$ we take $V =  \set{\id_D} \covs D$.
Then it suffices to show that $g = g \id_D$ factors through $f$. 
This is immediate since $g = (g\,k) f = g(kf)$ for the inverse $k$ of $f$.
For the induction step, consider:
\[
\frac{(X,\vec x_1; A,\phi_1) \covd U_1 \quad \ldots\quad (X,\vec x_n; A,\phi_n) \covd U_n}
{(X;A) \covd \bigcup_{1\leq i\leq n} (e_i U_i)}(*)
\]
and assume \ref{def:Gbasis-ii'} holds for the premises of the rule.
Let $g: (Y;B) \to (X;A)$ and define $g_i: (Y,\vec x_i; B,\phi_i g) \to (X,\vec x_i; A,\phi_i)$
extending $g$ with $\vec x_i g = \vec x_i$ , for each $1\leq i\leq n$.
By the induction hypothesis there exist $V_i \covs (Y,\vec x_i; B,\phi_i g)$ for $1\leq i\leq n$
with exactly the properties we need. Now consider the following inference:
\[
\frac{(Y,\vec x_1; B,\phi_1 g) \covd V_1 \quad \ldots\quad (Y,\vec x_n; B,\phi_n g) \covd V_n}
{(Y;B) \covd \bigcup_{1\leq i\leq n} (e_i' V_i)}(*')
\]
One could say that this inference is the pullback along $g$ of the previous one.
Like in the proof of Lemma~\ref{lem:iso_covT} one easily verifies $(*')$. 
%For example, $\phi_0 \subseteq A$ implies $\phi_0 f \subseteq Af \subseteq B$. 
%We may waive name conflicts since we could have used de Bruijn indices.
We define $V = \bigcup_{1\leq i\leq n} (e_i' V_i)$ and the following
commutative diagram (\ref{eq:coverage}) shows that $V$ has
the desired properties.
\begin{equation}\label{eq:coverage} %\circlearrowleft
  \begin{tikzcd}
    C'\arrow{d}{k}\arrow{r}{h_i \in V_i} &
    (Y,\vec x_i; B,\phi_i g)\arrow{d}{g_i} \arrow{r}{e'_i} &
    (Y;B)\arrow{d}{g} \\
    C \arrow{r}[swap]{f_i \in U_i} & (X,\vec x_i; A,\phi_i) \arrow{r}[swap]{e_i} & (X;A)
  \end{tikzcd}
\end{equation}

We have now proved \ref{def:Gbasis-i}, \ref{def:Gbasis-ii'} and \ref{def:Gbasis-iii}
uniformly for $\Crn,\Cvs,\Cts$.
We proceed to the proof of \ref{def:Gbasis-ii} for $\Cvs$, which is
very similar to the proof above of \ref{def:Gbasis-ii'}.
Again we use induction on the definition of $C \covd U$. In the base case we have that
$C \covd \set{f}$ for some isomorfism $f : C' \to C$. We put $V = \set{g^* f}$
and get $D \covd V$ as pullback preserves isos. In the induction step the only
differences with the proof of (ii') are:
the covers $V_i = \set{g_i^* f_i \mid f_i \in U_i}$ obtained by the induction hypothesis,
and the diagram  (\ref{eq:coverage}), which is now a pullback diagram
since the subdiagrams are pullbacks.
\end{proof}

\subsection{Canonical models}\label{sec:canonical-models}

We have now defined, for any fixed coherent theory $T$,
sheaf toposes $\Sh(\C,\covd)$ for $\C=\Cts,\Cvs,\Crn$, 
and we proceed to
define a \emph{canonical model} of $T$ in each of these.
\begin{definition}\label{def:can-model}
  Let $\Tm$ denote the presheaf of terms over the signature in each
  $\Psh(\C_\_)$, so $\Tm(X;A)$ is the set of terms with parameters in $X$.
  Note that for $\Cts$ we have $\Tm \cong \yo(x;)$, but not for $\Cvs,\Crn$.
  Now let $M := \as\Tm$ denote the associated sheaf.

  If $f$ is an $n$-ary function symbol, define $\denote f: M^n\to M$
  by applying the associated sheaf functor to the natural
  transformation $\Tm^n\to\Tm$ that maps $n$ terms $t_i$ at a
  condition $(X;A)$ to the term $f(t_1,\dots,t_n)$.

  If $p$ is an $n$-ary predicate symbol, define $\denote p$
  as the associated sheaf of the sub-presheaf of $\Tm^n$ that at a
  condition $(X;A)$ consists of those tuples $(t_1,\dots,t_n)$ for
  which $p(t_1,\dots,t_n)\in A$.
\end{definition}

\begin{remark}\label{rem:base-cases}
  By the definition of the associated sheaf functor in terms of
  locally compatible families, this means that every element
  $\alpha\in M(X;A)$ is locally a term: there is a cover
  $U\covs(X;A)$ such that for every $f:(Y;B)\to(X;A)$ in $U$,
  $\alpha f=\eta(t)$ for some $t\in\Tm(Y;B)$, and such a family of
  terms defines an element of $M(X;A)$ if it is locally compatible.

  Likewise, the elements $\alpha_1,\dots,\alpha_n\in M(X;A)$ satisfy
  $(X;A)\forces_M p(\alpha_1,\dots,\alpha_n)$ for $p$ an $n$-ary
  predicate symbol, if and only if there is a cover $U\covs(X;A)$ such
  that for every $f:(Y;B)\to(X;A)$ in $U$, we have
  $\alpha_if=\eta(t_i)$ for some terms $t_1,\dots,t_n\in\Tm(Y;B)$ and
  $p(t_1,\dots,t_n)\in B$.
\end{remark}

Note in particular that $(X;A)\forces_M\phi$ if $\phi\subseteq A$ when
$\phi$ is a conjunction of atoms (here and in the sequel we elide the
difference between a term $t$ and the corresponding element $\eta(t)$).
Regarding the canonical interpretation of equality, if
$s,t\in\Tm(X;A)$, then $(X;A)\forces_M s=t$ if{f} there is a cover
$U\covs(X;A)$ such that $sf=tf$ for all $f:(Y;B)\to(X;A)$ in $U$.

We need to check that these structures $M$ actually give models of
$T$. Consider an axiom of $T$:
\[
  \phi = \forall \vec x.\,(\phi_0 \to  \exists\vec x_1.\phi_1 \lor\cdots\lor
  \exists\vec x_k.\phi_k),
\]
which we take to be closed, where the $\phi_i$ are conjunctions of
atoms. We must show $(;) \forces_M \phi$, which by
Theorem~\ref{thm:forcing} amounts to showing for any condition $(X;A)$
and any instantiation of $\vec x$ over $X$ such that the atoms in
$\phi_0$ lie in $A$, we have
\[
  (X;A) \forces_M \exists\vec x_1.\phi_1 \lor\cdots\lor \exists\vec x_k.\phi_k .
\]
To construct a suitable cover of $(X;A)$ we use the same instantiation
in the step case of Definition~\ref{def:cov_T} and obtain a
cover $\{e_i\}_{1\le i\le k}\covs(X;A)$
using identity covers in the premises. Indeed, for each $i$ we have
\[(X,\vec x_i; A,\phi_i) \forces_M \exists\vec x_i.\phi_i,\]
with the identity instantiation of the $\vec x_i$ because 
$(X,\vec x_i;A,\phi_i)\forces_M \phi_i$.

\section{Comparison with the classifying topos}\label{sec:comparisons}

In this section we first recall in Subsection \ref{sec:classifying-toposes} some
background on classifying toposes as well as a convenient presentation
due to Coste and Coste \cite{CosteCoste1975}. We then prove that for a
purely relational signature, $\Sh(\Cvs,\covd_T)$ is the classifying
topos of $T$.
In Subsection~\ref{sec:comparison_morphisms} we study the geometric
morphisms between the sheaf toposes corresponding to our sites.

\subsection{Classifying toposes}
\label{sec:classifying-toposes}

The theory of classifying toposes is based on theories in logic with
equality, so we need to connect theories in logic without equality to
ones in logic with equality. Given a coherent theory $T$ in logic
without equality, let $T^=$ denote $T$ expanded with an equality
relation and the (coherent) axioms ensuring this is a congruence with
respect to the signature.

To discuss classifying toposes, let now $T$ be a coherent theory in
logic with equality (in particular, the signature contains $=$ and all
equality axioms are assumed).
Let $\Mod_T(\mathcal E)$ denote the category of models of $T$ in
$\mathcal E$ and homomorphisms between them. By the remark at the end
of Section~\ref{sec:structures}, $\Mod_T(\mathcal E)$ 
depends on $\mathcal E$ in a contravariant way for a coherent theory $T$, 
as the inverse image part of a geometric morphism
carries $T$-models to $T$-models.

\begin{definition}\label{def:classifying-topos}
  A \emph{classifying topos} for $T$ is a sheaf topos denoted
  $\Set[T]$ equipped with a \emph{generic model} $M_T$ of $T$, i.e.,
  the map $F \mapsto F^*M_T$ induces an equivalence of categories
  \[
    \Hom(\mathcal E, \Set[T]) \simeq \Mod_T(\mathcal E)
  \]
  for every sheaf topos $\mathcal E$.
\end{definition}
Given $T$, the classifying topos and the generic model are
by this universal property
uniquely determined up to unique equivalence and isomorphism.%
  \footnote{It may of course happen for different
  theories $T,T'$ that nevertheless $\Set[T]\simeq\Set[T']$, in which
  case we say that the theories are \emph{Morita-equivalent}.}

When $T$ is a coherent theory in logic without equality, we say that a
topos is a classifying topos for $T$ if it is a
classifying topos for $T^=$. This makes sense because any model of $T$
in a topos $\mathcal E$ is uniquely also a model of $T^=$ in the sense
of logic with equality, as equality must be interpreted by the
equality of $\mathcal E$.

There are several possible constructions of the classifying topos of a
coherent theory $T$ (in logic with equality).  We shall use a
presentation that goes back to \cite[p.~25]{CosteCoste1975}, where the
classifying topos is constructed relative to a subtheory $T^0$ that is
required to be \emph{strict universal Horn}, i.e., consisting only of axioms
of the form (the universal closure of) $\phi_1\land\cdots\land\phi_n \to \psi$, where the
$\phi_i$ and $\psi$ are atomic formulas.%
\footnote{A general \emph{Horn} theory also allows axioms where $\psi$
  is $\bot$.}  The site is based on a category called the
\emph{syntactic site} of $T^0$, $\C^0$. This has as objects pairs
$(X;A)$ where $X$ is a finite set of variables, and $A$ is a set of
facts which may now include equalities.  The morphisms $(X;A)\to(Y;B)$
are equivalence classes of term substitutions $f : Y \to \Tm(X)$ such
that $T^0$ together with the facts in $A$ proves the facts in
$Bf$. Two term substitutions $f,g$ are equivalent if $T^0$ together
with $A$ proves $yf=yg$ for each $y\in Y$.%
\footnote{The notion of syntactic site applies more generally to
  coherent theories, furnishing another construction of the
  classifying topos with respect to a suitable topology, but in this
  case one takes as morphisms the provably functional relations. For
  strict universal Horn theories the two notions coincide, as can be seen,
  for instance, using the intuitionistic multi-succedent calculus of
  \cite{NegriPlato1998}, where Horn axioms
  $\phi_1\land\cdots\land\phi_n \to \psi$ and the equality axioms are
  included as rules.}
Note that $\C^0$ is a category with all finite
limits (by an argument similar to that of \cite[D1.4.2]{Elephant}).

The theorem of \cite[p.~25]{CosteCoste1975} (see also Proposition~D3.1.10 of
\cite{Elephant}) can now be phrased as follows:
\begin{theorem}\label{thm:CC}
  The classifying topos $T$ can be presented as
  $\Sh(\C^0,\covd_T)$, where $\covd_T$ is generated as in
  Definition~\ref{def:cov_T} using the axioms of $T$ not in $T^0$.
  The generic model is $M_T = \as\yo(x;)$.
\end{theorem}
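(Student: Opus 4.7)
The plan is to prove Theorem~\ref{thm:CC} by reducing it to Diaconescu's theorem, using the observation that the strict universal Horn structure of $T^0$ makes $\C^0$ behave like a ``syntactic category with finite limits,'' so that flat functors out of it correspond to $T^0$-models, while the coverage $\covd_T$ additionally forces the non-Horn coherent axioms of $T$.

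First, I would invoke Diaconescu's theorem for $\Sh(\C^0,\covd_T)$, which gives an equivalence
\[
  \Hom(\mathcal E, \Sh(\C^0,\covd_T)) \simeq \textup{FlatCont}(\C^0,\mathcal E),
\]
the right-hand side being the category of $\covd_T$-continuous covering-flat functors $\C^0 \to \mathcal E$. Since $\C^0$ has all finite limits, covering-flatness is equivalent to preservation of finite limits, so the goal reduces to identifying finite-limit-preserving, $\covd_T$-continuous functors $\C^0 \to \mathcal E$ with $T^=$-models (equivalently $T$-models) in $\mathcal E$.

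Second, I would spell out the correspondence between finite-limit-preserving functors $F:\C^0 \to \mathcal E$ and $T^0$-models in $\mathcal E$. Given such $F$, set $|M| := F(x;)$; interpret an $n$-ary function symbol $f$ by applying $F$ to the morphism $(x;) \to (x_1,\dots,x_n;)$ encoded by the substitution $x \mapsto f(x_1,\dots,x_n)$, after identifying $F(x_1,\dots,x_n;)\cong F(x;)^n$ via the canonical projections (which $F$ preserves as finite products); interpret an $n$-ary predicate symbol $p$ by the subobject $F(x_1,\dots,x_n;p(x_1,\dots,x_n)) \hookrightarrow F(x_1,\dots,x_n;)\cong F(x;)^n$ obtained from the canonical monomorphism in $\C^0$. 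Conversely, a $T^0$-model $M$ in $\mathcal E$ determines a functor $F_M$ sending $(X;A)$ to the subobject $\denote A_M \hookrightarrow |M|^{|X|}$ cut out by the atoms in $A$; the strict universal Horn axioms of $T^0$ together with the equality axioms ensure that this extends to a well-defined finite-limit-preserving functor on the equivalence classes of substitutions. The main bookkeeping here is checking that a substitution $f:(X;A)\to(Y;B)$ with $Bf$ provable from $T^0+A$ really induces a morphism of subobjects, and that two $T^0$-provably-equal substitutions induce the same morphism; this is where the definition of $\C^0$ via provably functional terms is designed to make things work.

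Third, I would show that under this equivalence, $\covd_T$-continuity of $F$ corresponds exactly to satisfaction of the non-Horn coherent axioms of $T$. By the inductive definition of $\covd_T$ (Definition~\ref{def:cov_T}), the basic covers $(X;A) \covd_T \{e_i : (X,\vec x_i; A,\phi_i)\to(X;A)\}_{i}$ arise from instances of an axiom $\phi_0 \to \bigvee_i \exists\vec x_i.\phi_i$ of $T$ with $\phi_0\subseteq A$. Sending this family to a jointly epimorphic family in $\mathcal E$ amounts to saying that the subobject $\denote{\phi_0(\vec x)}_M$ is covered by the images of the subobjects $\denote{\exists\vec x_i.\phi_i(\vec x,\vec x_i)}_M$, which by the Theorem~\ref{thm:forcing}-style semantics is precisely the validity of that axiom in $M$. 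Putting these three steps together gives the equivalence $\Hom(\mathcal E,\Sh(\C^0,\covd_T))\simeq \Mod_T(\mathcal E)$.

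Finally, to identify the generic model, I would trace through the equivalence: the identity geometric morphism $\Sh(\C^0,\covd_T)\to\Sh(\C^0,\covd_T)$ corresponds to the canonical flat continuous functor $\C^0\to\Sh(\C^0,\covd_T)$, namely $\as\yo$. The associated $T$-model is obtained by evaluating this functor at the generic object $(x;)$, giving $M_T = \as\yo(x;)$, as claimed. The main obstacle in the whole argument is the second step, where the combinatorial identification of finite-limit-preserving functors on $\C^0$ with $T^0$-models must be carried out carefully; the remainder follows smoothly from Diaconescu's theorem and the definition of $\covd_T$.
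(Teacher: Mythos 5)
The paper does not actually prove Theorem~\ref{thm:CC}: it is imported verbatim from Coste--Coste and Johnstone (Proposition D3.1.10 of the Elephant), so there is no in-paper argument to compare against. Your reconstruction via Diaconescu's theorem is exactly the standard proof found in those sources, and the outline is correct: classify geometric morphisms into $\Sh(\C^0,\covd_T)$ by continuous flat functors; use that $\C^0$ has finite limits to replace flatness by left-exactness; match lex functors with $T^0$-models and $\covd_T$-continuity with the remaining coherent axioms (for the latter it suffices to treat the one-step generating covers, since jointly epimorphic families are closed under the composition and isomorphism clauses of Definition~\ref{def:cov_T}); and read off the generic model from the image of $(x;)$ under $\as\yo$. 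The place where the real content sits is the step you yourself flag as the ``main obstacle'': showing that $M\mapsto F_M$ is well defined on the equivalence classes of substitutions, that $F_M$ is lex (i.e., that $\denote{A}_M\hookrightarrow |M|^{|X|}$ really computes the finite limit presented by $(X;A)$, which is where strict universal Hornness of $T^0$ is used), and that the two constructions are mutually inverse equivalences also on homomorphisms. As written this is a plan rather than a proof, but it is the right plan. Two small corrections: with the paper's convention that a morphism $(X;A)\to(Y;B)$ in $\C^0$ is a substitution $Y\to\Tm(X)$, the arrow interpreting a function symbol $f$ goes $(x_1,\dots,x_n;)\to(x;)$, not the other way around; and ``covering-flat $=$ lex'' is not true for arbitrary functors out of a finitely complete site, though it is harmless here because for continuous functors into a topos both notions agree with the flatness appearing in Diaconescu's theorem, which for a finitely complete domain is equivalent to left-exactness.
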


\begin{theorem}\label{cor:topos_equiv}
  For a purely relational signature, $\Sh(\Cvs,\covd_T)$ is
  canonically equivalent to the classifying topos of $T^=$.
\end{theorem}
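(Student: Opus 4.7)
The plan is to invoke the Coste--Coste presentation (Theorem~\ref{thm:CC}) with an appropriately chosen strict universal Horn subtheory, and then exhibit an equivalence of sites between the resulting syntactic site and $\Cvs$. Take $T^0 \subseteq T^=$ to be precisely the equality axioms (reflexivity, symmetry, transitivity, and congruence for the predicates of $\Sigma$). These are strict universal Horn, so Theorem~\ref{thm:CC} presents the classifying topos of $T^=$ as $\Sh(\C^0, \covT)$, where $\C^0$ is the syntactic site of $T^0$.

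The comparison is the inclusion $F : \Cvs \to \C^0$ sending $(X; A) \in \Cvs$ to the same pair viewed in $\C^0$ and each variable substitution to its equivalence class of term substitutions; this is well-defined because for a purely relational signature $\Cvs = \Cts$. The first claim is that $F$ is an equivalence of categories. Full faithfulness follows from the observation that for $(X; A), (Y; B) \in \Cvs$ the set $B$ contains no equality atoms, so $T^0 + B$ proves no non-trivial equalities among variables; the Coste--Coste provability condition $T^0 + B \vdash Af$ thus reduces to the literal containment $Af \subseteq B$, and the equivalence relation on substitutions collapses to equality. Essential surjectivity is furnished by a quotient construction: given $(X; A) \in \C^0$, let $\sim_A$ be the equivalence relation on $X$ generated by the equality atoms in $A$, write $q : X \to X/\sim_A$ for the quotient, choose a section $s$ of $q$, and let $A'$ consist of the predicate atoms of $A$ with variables replaced by their $\sim_A$-classes. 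Then $(X/\sim_A; A') \in \Cvs$, and the substitutions $q$ and $s$ underlie mutually inverse isomorphisms in $\C^0$, verified via the congruence axioms.

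It remains to compare the coverages under $F$. Both are generated by the rules of Definition~\ref{def:cov_T} from the axioms of $T$. For $(X; A) \in \Cvs$, the generating covers in $\Cvs$ and in $\C^0$ agree, because the condition $\phi_0 \subseteq A$ of Definition~\ref{def:cov_T} matches the Coste--Coste provability condition $T^0 + A \vdash \phi_0$ when $A$ contains no equality atoms. For general $(X; A) \in \C^0$, any covering family transports along the isomorphism constructed above to a covering family of the corresponding object of $\Cvs$. Combining with Theorem~\ref{thm:CC} then yields $\Sh(\Cvs, \covT) \simeq \Sh(\C^0, \covT) \simeq \Set[T^=]$.

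The main obstacle will be this last step: reconciling the literal containment $\phi_0 \subseteq A$ in Definition~\ref{def:cov_T} with the $T^0$-provability condition native to the Coste--Coste construction. A clean route is to invoke the comparison lemma for sites (cf.~\cite[C2.2.3]{Elephant}), which reduces the entire verification to the fact that every object of $\C^0$ is isomorphic to one in the image of $F$, already established in the essential-surjectivity step.
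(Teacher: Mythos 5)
Your proposal follows essentially the same route as the paper: identify the Coste--Coste site $\C^0$ over the base theory of equality, show the canonical functor $\Cvs\to\C^0$ is fully faithful (since equality-free facts prove no nontrivial equations) and essentially surjective (by eliminating equality atoms---you do this in one step via a quotient of the variable set, the paper one atom at a time via explicit isomorphisms), check that covers correspond under the equivalence, and apply Theorem~\ref{thm:CC}. The only caveat is your closing appeal to the comparison lemma, which is both unnecessary (an equivalence of sites already suffices) and slightly overstated (density alone would not discharge the coverage comparison, which you had in any case already handled in the preceding paragraph).
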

\begin{proof}
Let $\C^0$ be the syntactic site from \cite{CosteCoste1975}
over the base theory of equality (eq) in the signature of $T^=$.
One difference between $\Cvs$ and $\C^0$ is that the objects
of the latter may contain equality atoms. By the assumption
on the signature, equality atoms are all of the form $x=y$.
These can easily be eliminated via isomorphisms
$(X; A, x=x) \to (X;A)$ and
$(X,y; A, x=y) \to (X;A[y:=x])$.
Adopting a standardized procedure for eliminating equality atoms
thus yields that the canonical fully faithful functor from $\Cvs$
to $\C^0$ is an equivalence of categories.
By induction on the definition of $C \covd_T U$ one proves that
the equivalence preserves covers both ways.
Hence, the sites $\Cvs$ and $\C^0$ give equivalent sheaf toposes,
$\Sh(\Cvs,\covd_T) \simeq \Sh(\C^0,\covd_T)$.
Now apply Theorem~\ref{thm:CC}.
\end{proof}

In a forthcoming paper we shall characterize the sheaf toposes
$\Sh(\C_x,\covd_T)$ as the classifying toposes of certain extensions
of the coherent theory $T$. As a preview, we note that when the
signature contains function symbols, $\Sh(\Cvs,\covd_T)$ and the
classifying topos of $T^=$ can be quite different, cf.\
Example~\ref{exa:bi-pointed} below.

\subsection{Comparison morphisms}
\label{sec:comparison_morphisms}

Let us note that the toposes we have considered are related by a
series of geometric morphisms. Consider the string of canonical faithful functors
\[
  \begin{tikzcd}
    \Crn \arrow{r}{i} &
    \Cvs \arrow{r}{j} &
    \Cts \arrow{r}{k} &
    \C^0
  \end{tikzcd}
\]
where $\C^0$ is the syntactic site of the theory of equality (eq) in the
signature of $T^=$ (using either of the presentations given in
Section~\ref{sec:classifying-toposes}). Giving each of these
categories the coverage corresponding to a coherent theory $T$, all
the functors have the covering lifting property, so we get a
corresponding string of geometric morphisms
\begin{equation}\label{eq:comparison_morphisms}
  \begin{tikzcd}
    \Sh(\Crn) \arrow{r} &
    \Sh(\Cvs) \arrow{r} &
    \Sh(\Cts) \arrow{r} &
    \Sh(\C^0) = \Set[T]
  \end{tikzcd}
\end{equation}
ending with the classifying topos for $T$ (using Theorem~\ref{thm:CC}).

\begin{theorem}
  The inverse image functors map canonical models to canonical models.
\end{theorem}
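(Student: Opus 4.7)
The plan is to verify, for each of the three comorphisms $F \in \{i, j, k\}$ in the chain, that the induced inverse image functor $\as F^* \incl : \Sh(\D) \to \Sh(\C)$ sends the canonical model $M_\D$ to a sheaf naturally isomorphic to $M_\C$, together with matching interpretations of function and predicate symbols. The first move is to reduce this to a computation on presheaves. Applying Lemma~\ref{lem:comorphism_as} with $P := \Tm_\D$ (or, for $F = k$, with $P := \yo_{\C^0}(x;)$), the natural map $\as F^*(P) \to \as F^*(\as P)$ is an isomorphism, and since $M_\D = \as P$ by definition (invoking Theorem~\ref{thm:CC} when $\D = \C^0$), it suffices to exhibit a natural isomorphism $F^* P \cong \Tm_\C$ of presheaves on $\C$ and then apply $\as$.

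For $F = i$ and $F = j$ this is immediate: both functors are the identity on objects, and the value $\Tm_\D(X;A) = \Tm_\Sigma(X)$ together with its substitution action depends only on variable-substitution data, so $F^* \Tm_\D = \Tm_\C$ on the nose. The interesting case is $F = k : \Cts \to \C^0$. Here $k^* \yo_{\C^0}(x;)(X;A) = \Hom_{\C^0}((X;A),(x;))$ is the set of term substitutions $x \mapsto t$ modulo the equivalence $s \sim_A t$ iff $T^0 + A \vdash s = t$. Because the $\Cts$-object $(X;A)$ contains no equality atoms and $T^0$ consists only of the reflexivity and congruence rules for $=$, no nontrivial equalities are derivable from $T^0 + A$, and this equivalence collapses to syntactic identity. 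The same observation applied to a hom-set $\Hom_{\C^0}(k(X;A), k(Y;B))$ shows that $k$ is fully faithful on the image of $\Cts$, whence $k^* \yo_{\C^0}(x;) \cong \Tm_{\Cts}$ as presheaves. This is the main technical point requiring care, since it is exactly here that the equality built into $\C^0$ could have caused collapse; the no-equality-atoms condition on $\Cts$-objects saves us.

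Finally, I would verify that the interpretations $\denote f$ and $\denote p$ transport correctly. Both are defined as the sheafifications of, respectively, the natural transformation $\Tm^n \to \Tm$ sending $(t_1,\ldots,t_n)$ to $f(t_1,\ldots,t_n)$, and the sub-presheaf of $\Tm^n$ at $(X;A)$ consisting of tuples with $p(t_1,\ldots,t_n) \in A$. Since these syntactic data are defined uniformly on all four sites and correspond to themselves under the presheaf isomorphisms $F^* \Tm_\D \cong \Tm_\C$ established above, applying $\as F^* \incl$ identifies the interpretations on the $\D$-side with the corresponding interpretations on the $\C$-side. Combining the three steps yields the desired isomorphism of canonical $\Sigma$-structures.
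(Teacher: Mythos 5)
Your proof is correct and follows essentially the same route as the paper's: both reduce to a presheaf-level computation via Lemma~\ref{lem:comorphism_as} and then identify $F^*$ of the underlying presheaf ($\Tm$, resp.\ $\yo(x;)$) with the presheaf underlying the canonical model on the source site, the case of $k$ resting on the observation that no nontrivial equalities are derivable from equality-free facts (which is exactly the full faithfulness of $k$ that the paper invokes). Your additional check that the interpretations of function and predicate symbols transport along these isomorphisms is a detail the paper leaves implicit, but it does not change the structure of the argument.
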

\begin{proof}
  Recall that the canonical models are given by $\as\Tm$ for $\Crn$
  and $\Cvs$ and $\as\yo(x;)$ in $\Cts$ and $\C^0$. Note by
  Lemma~\ref{lem:comorphism_as}, we can compute the inverse image of
  these associated sheaves by taking the inverse image of the presheaf
  and then sheafifying.

  Now, for $C\in\Cts$ we have
  \[
    k^*(\yo(x;))(C) = \yo(x;)(kC) = \Hom_{\C^0}(kC,(x;)) \simeq
    \Hom_{\Cts}(C;(x;)) = \yo(x;)(C),
  \]
  using that $k$ is full as well as faithful.
  Thus, $k^*(\as\yo(x;)) \simeq \as\yo(x;)$ in $\Sh(\Cts)$.

  Then, for $C\in\Cvs$ we have $j^*(\yo(x;))(C) = \Hom_{\Cts}(C,(x;)) \simeq
  \Tm(C)$, so $j^*(\as\yo(x;)) \simeq \as\Tm$ in $\Sh(\Cvs)$.

  Finally, for $C\in\Crn$ we have $i^*\Tm(C) \simeq \Tm(C)$, so
  $i^*(\as\Tm) \simeq \as\Tm$ in $\Sh(\Crn)$.
\end{proof}
We see that the compositions through to $\Sh(\C^0)$
in~\eqref{eq:comparison_morphisms} witness that the canonical models
are models of $T$.

Since $i$, $j$ and $k$ preserve covers, we can consider whether they
are also covering-flat and thus morphisms of sites, cf.~Theorem~\ref{thm:site-morphism}.

\begin{theorem}\label{thm:flatness}
  The inclusion functor $j : \Cvs \to \Cts$ is representably flat, and
  thus $(j_! \dashv j^*)$ gives rise to a geometric morphism from $\Sh(\Cts)$
  to $\Sh(\Cvs)$.

  The composite $kj : \Cvs \to \C^0$ preserves finite limits, and
  there is thus an induced geometric morphism from $\Sh(\C^0)$ to
  $\Sh(\Cvs)$.
\end{theorem}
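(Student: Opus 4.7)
The plan is to apply Theorem~\ref{thm:site-morphism} in both parts. Cover preservation is immediate for either functor: the rules in Definition~\ref{def:cov_T} generate covers from purely syntactic data---a condition $(X;A)$ and an instance of an axiom of $T$---independently of the ambient category, so the inclusions $j : \Cvs \to \Cts$ and $kj : \Cvs \to \C^0$ transport covers to covers. For the flatness hypotheses I will prove the stronger statement that $j$ and $kj$ both preserve finite limits. Since $\Cvs$ has all finite limits by Section~\ref{sec:finite_limits}, it suffices to check preservation of the terminal object, binary products, and equalizers. The first two are trivial: $(;)$ is terminal and $(X\sqcup X';\,A\sqcup A')$ is the binary product in each of $\Cvs$, $\Cts$, and $\C^0$.

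The interesting case is equalizers. Let $f,g : (Y;B)\to(X;A)$ be parallel in $\Cvs$, so $f,g : X\to Y$ are variable substitutions. Let $e : Y \to Z$ be the coequalizer in $\Set$, with $Z = Y/{\sim}$ and $\sim$ generated by $\{f(x)\sim g(x)\mid x\in X\}$; then $e : (Z;Be) \to (Y;B)$ is the equalizer in $\Cvs$. For $j$: given any morphism $h : (W;D)\to(Y;B)$ in $\Cts$---that is, a term substitution $h : Y \to \Tm(W)$---with $f\circ h = g\circ h$, we have $h(f(x)) = h(g(x))$ in $\Tm(W)$ for every $x$, and induction along the generators of $\sim$ forces $h$ to respect $\sim$; this yields a unique $k : Z \to \Tm(W)$ with $h = e\circ k$, and the identity $(Be)k = Bh \subseteq D$ promotes $k$ to a morphism $(W;D) \to (Z;Be)$ in $\Cts$, confirming the equalizer property there.

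For $kj$, the equalizer in $\C^0$ of $f,g$ is canonically presented as $\id : (Y;\,B\cup\{f(x)=g(x):x\in X\})\to(Y;B)$, so I would exhibit an isomorphism in $\C^0$ between this object and $kj(Z;Be) = (Z;Be)$. Using $e : Y\to Z$ in one direction and any section $s : Z\to Y$ of $e$ (constructible for finite $Y$ by well-ordering it) in the other, both substitutions become morphisms in $\C^0$ because the equality axioms together with $\{f(x)=g(x)\}$ suffice to derive $s(e(y)) = y$ for every $y\in Y$; this same derivation shows that the composite $s$-after-$e$ is equivalent to the identity modulo the morphism equivalence in $\C^0$, while the other composite equals $\id_Z$ strictly. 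The main subtlety is tracking the $\C^0$-morphism conventions---substitutions run opposite to the morphism arrows, and morphisms are identified only up to derivable equality---but once the generators $\{f(x)=g(x)\}$ are in hand, the derivation of $s(e(y))=y$ is a direct induction. With the three preservation statements in place, Theorem~\ref{thm:site-morphism} supplies the geometric morphisms $\Sh(\Cts)\to\Sh(\Cvs)$ via $(j_!\dashv j^*)$ and $\Sh(\C^0)\to\Sh(\Cvs)$ via $((kj)_!\dashv(kj)^*)$.
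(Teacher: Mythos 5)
Your proposal is correct, and it reaches the same conclusion by a slightly different and somewhat stronger route than the paper. For the first part the paper proves only what is literally needed: representable flatness of $j$, read off from the fact that $\Cvs$ has all finite limits (constructed in Section~\ref{sec:finite_limits} from the terminal object and pullbacks) and that any cone in $\Cts$ over a finite diagram in $\Cvs$ factors through the image of the limiting cone. You instead prove outright that $j$ preserves finite limits, decomposing via terminal object, binary products and equalizers; the equalizer case is the only one needing work, and your argument --- the coequalizer $e:Y\to Z$ in $\Set$ together with the observation that a term substitution $h$ satisfying $h(xf)=h(xg)$ for all generators is constant on the generated equivalence classes --- is sound. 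This is consistent with the paper's earlier remark that $\Cts$ lacks equalizers in general: what you establish is that the $\Cvs$-equalizer cone remains limiting in $\Cts$, which is the correct sense of ``preserves'' and does imply representable flatness. For the second part the paper simply declares the statement clear; your explicit verification, with the section $s:Z\to Y$ and the derivation of $s(e(y))=y$ in $\C^0$ from the added equations $\{xf=xg\}$, supplies exactly what ``clear'' elides (strictly you should also note that the resulting isomorphism commutes with the two equalizer maps to $(Y;B)$, but your construction visibly does). The trade-off: the paper's proof is shorter and sidesteps equalizers by working with pullbacks, while yours yields the stronger statement that $j$ preserves finite limits and makes the $\C^0$ case self-contained.
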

\begin{proof}
  We first need to show that for any finite diagram $d$ in
  $\Cvs$, any cone over $d$ in $\Cts$ factors through some cone over
  $d$ in $\Cvs$. But $\Cvs$ has finite limits, and we see from
  their explicit description in Section~\ref{sec:finite_limits} that any
  cone over $d$ in $\Cts$ factors through the limiting cone of $d$ in
  $\Cvs$.

  The second statement is clear.
\end{proof}

The considerations in the beginning of Section~\ref{sec:finite_limits}
that show that $\Crn$ and $\Cts$ fail to have equalizers also show
that $i$ and $k$ need not be covering-flat (take an empty theory for
instance).

One might hope that the morphisms in~\eqref{eq:comparison_morphisms}
are \emph{open}, meaning that the inverse image parts preserve first-order logic. For empty
theories the coverages are trivial, and the toposes are presheaf
toposes. However, we have from \cite[Proposition~2.6]{Johnstone1980}
that $f:\C\to\D$ induces an open geometric morphism $\Psh(\C)\to\Psh(\D)$
if{f} for every $C\in\C$ and $\alpha: D\to f(C)$ in $\D$, there is
$\beta:C'\to C$ in $\C$ as well as $\gamma:f(C')\to D$ and $\delta:
D\to f(C')$ in $\D$ with $\gamma\delta=1_D$ and
$\alpha\gamma=f(\beta)$. We construct counterexamples as follows: for
$i:\Crn\to\Cvs$ let $\alpha$ identify two variables; for
$j:\Cvs\to\Cts$ let $\alpha$ be any non-variable term substitution;
for $k:\Cts\to\C^0$ let $D$ contain an equation. Thus, in general none
of the morphisms in~\eqref{eq:comparison_morphisms} are open.

\section{Soundness and Completeness}\label{sec:coherent_completeness}

In this section we prove for each of the canonical models in
$\Sh(\Crn)$, $\Sh(\Cvs)$ and $\Sh(\Cts)$: \emph{soundness} for a class
of formulas that includes both first order intuitionistic formulas and
(infinitary) geometric implications, and \emph{completeness} for
generalized geometric implications.

We write for $\forces\subrn$, $\forces\subvs$ and $\forces\subts$ for
the forcing relations $\forces_M$ for the canonical models $M$ in
$\Crn$, $\Cvs$ and $\Cts$, respectively, or even just $\forces$ when
the category can be inferred from the context.

If $C=(X;A)$ is a condition, we write $\Fact(C)=A$ for the set of
facts in $C$. In Definition~\ref{def:can-model} we introduced the
notation $\Tm(C)$ for the set of terms with free variables in $X$.

We can reformulate Theorem~\ref{thm:forcing} as a \emph{definition},
referring only to actual terms (rather than locally compatible
families of terms), thus giving a forcing relation between conditions
and formulas which can be understood without any
sheaf or topos machinery.

\begin{definition}\label{def:forcing}
  Let $\C$ be one of the categories $\Crn,\Cvs$ or $\Cts$ and let
  ${\covd}$ be any coverage on $\C$.
  For any condition $C=(X;A)$ and any formula
  $\phi$ with free variables in $X$ we define the forcing relation
  $C\forces\phi$ by induction on $\phi$ as follows:

  \begin{enumerate}
  \item $C\forces\top$ always;

  \item $C\forces\bot$ if $C\covd\es$;
    % (alternatively one can say that there is a dynamic proof of $\bot$ from $C$);

  \item $C\forces\phi$ if $\phi$ is a fact and there is $U \covs C$ with
    $\phi f \in \Fact(D)$ for all $f: D\to C$ in $U$;
    % (alternatively one can say that there is a dynamic proof of $\phi$ from $C$);

  \item $C\forces\bigwedge_{i\in I}\phi_i$ if $C\forces\phi_i$ for all $i\in I$;

  \item $C\forces\bigvee_{i\in I}\phi_i$ if there is $U \covs C$ such that
    for all $f: D\to C$ in $U$, $D\forces\phi_i f$ for some $i\in I$;

  \item $C\forces\phi_1 \to \phi_2$ if for all maps $f : D\to C$
    in $\C$ we have $D\forces\phi_2 f$ whenever $D\forces\phi_1 f$;

  \item $C\forces\forall x.\phi$ if for all maps $f : D \to C$ we have 
    $D\forces\phi [f, x:=t]$ for all $t\in\Tm(D)$;
 
  \item $C\forces\exists x.\phi$ if there is $U\covs C$ such that,
    for all $f: D\to C$ in $U$, $D\forces\phi [f, x:=t]$ for some $t\in\Tm(D)$.

  \end{enumerate}
\end{definition}

\begin{theorem}\label{thm:forcing_agree}
  If we take ${\covd}$ to be ${\covd_T}$ as in
  Definition~\ref{def:cov_T}, then this definition of $\forces$ agrees
  with the one for the canonical model as in
  Definition~\ref{def:forcing_structure} for all formulas.
\end{theorem}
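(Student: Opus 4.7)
The plan is to proceed by induction on the structure of $\phi$, checking that for each condition $C = (X;A)$ the clause of Definition~\ref{def:forcing} is equivalent to the clause of Theorem~\ref{thm:forcing} applied to the canonical interpretation of the free variables of $\phi$ (each $x_i \in X$ interpreted as $\eta(x_i) \in M(X;A)$). The cases for $\top$, $\bot$, $\bigwedge$ and $\bigvee$ transcribe verbatim between the two formulations, so they follow immediately from the induction hypothesis once we observe that the locality condition in the $\bigvee$-clause of Theorem~\ref{thm:forcing} is literally the one in Definition~\ref{def:forcing}.

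The atomic case is the first point that requires an argument. For a formula $p(t_1,\dots,t_n)$ with $t_i \in \Tm(X)$, the canonical interpretation yields the tuple $(\eta(t_1),\dots,\eta(t_n)) \in M(X;A)^n$. By Remark~\ref{rem:base-cases}, semantic forcing at $C$ is equivalent to the existence of a cover $U \covs C$ such that for every $f : (Y;B) \to (X;A)$ in $U$ there are terms $t'_i \in \Tm(Y;B)$ with $\eta(t_i) f = \eta(t'_i)$ and $p(t'_1, \dots, t'_n) \in B$. Naturality of $\eta$ gives $\eta(t_i) f = \eta(t_i f)$, so the canonical choice $t'_i := t_i f$ yields exactly the condition $p(t_1, \dots, t_n)f \in \Fact(D)$ of the syntactic clause. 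The same naturality observation handles the implication case: semantic restriction along $f$ of the canonical interpretation of $\phi$ corresponds to the syntactic substitution $\phi f$, so that the induction hypothesis on $\phi_1$ and $\phi_2$ applies directly under each map $f : D \to C$ in $\C$.

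The main obstacle is the two quantifier cases, where the semantic formulation of Theorem~\ref{thm:forcing} ranges over arbitrary elements $\beta \in M(D) = \as\Tm(D)$, which a priori are only \emph{locally} terms, whereas the syntactic formulation ranges over genuine terms $t \in \Tm(D)$. This gap is precisely closed by Lemma~\ref{lem:forcing-quant} applied to the presheaf $P = \Tm$ with $M = \as\Tm$: the lemma shows that in both the universal and existential clauses, $\beta$ may equivalently be taken to range over $\Tm(D)$ via the unit $\eta$ (at the cost, in the existential case, of the locality being absorbed into the same cover already present in the syntactic clause, using transitivity of $\covd_T$ from Theorem~\ref{thm:cov_T}). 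Combined with the induction hypothesis, this yields the agreement for $\forall$ and $\exists$, completing the induction uniformly for each of $\Crn$, $\Cvs$, and $\Cts$.
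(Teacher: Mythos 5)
Your proposal is correct and follows essentially the same route as the paper's own (very terse) proof: induction on $\phi$, with Theorem~\ref{thm:forcing} handling the connective cases, Lemma~\ref{lem:forcing-quant} (applied to $P=\Tm$, $M=\as\Tm$) closing the gap between elements of $\as\Tm(D)$ and genuine terms in the quantifier cases, and local character together with Remark~\ref{rem:base-cases} and naturality of $\eta$ settling the atomic case. The only point you gloss slightly is the semantic-to-syntactic direction for atoms, where the terms supplied by Remark~\ref{rem:base-cases} are only \emph{locally} equal to $t_i f$, so one further refinement of the cover (transitivity) is needed --- but this is exactly what the paper subsumes under ``local character in the base case.''
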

\begin{proof}
  This follows from local character in the base case, and
  Theorem~\ref{thm:forcing} in the other cases, together with
  Lemma~\ref{lem:forcing-quant} in the quantifier cases.
\end{proof}
\begin{remark}\label{rem:forcing_equality}
  If we add to Definition~\ref{def:forcing} the clause
  \begin{enumerate}
    \addtocounter{enumi}{8}
    \item $C\forces s=t$ if there is a cover $U \covs C$ with $sf =
      tf$ for all $f : D \to C$ in $U$,
  \end{enumerate}
  then Theorem~\ref{thm:forcing_agree} remains true when the
  interpretation of equality for $M$ is the equality of the corresponding
  topos. We can thus reason about equality in the canonical models
  even though $T$ is a theory in logic without equality. However, our
  completeness theorem below does \emph{not} hold for sentences
  formulated in logic with equality. Below we shall describe what happens for
  sentences with $=$ via a suitable extension of $T$.
\end{remark}
The only difference between the forcing relations of
Definitions~\ref{def:forcing_structure} and~\ref{def:forcing} is thus
that the former is defined for a wider class of formulas, namely those
containing locally compatible families of terms in place of ordinary
terms. But by local character, we can always reduce to the case of
ordinary terms and formulas.

Having established Theorem~\ref{thm:forcing_agree}, we inherit
the properties of monotonicity and local character also for the
forcing relation of Definition~\ref{def:forcing}.

%For a set of formulas $\Gamma$, let $C\forces\Gamma$ denote 
%$C\forces\phi$ for all $\phi\in\Gamma$.

% Repeated applications of the definition of forcing can sometimes be simplified,
% as stated in the following lemma.
% 
% \begin{lemma}\label{lem:forcing_iter}
%   For all conditions $C$ and formulas $\phi,\psi$ we have:
%   \begin{enumerate}
%   \item $C\forces\forall x y. \phi$ if{f} for all $g : D \to C$ and for
%     all $t_x,t_y \in \Tm(D)$, $D\forces\phi [g, x:= t_x, y:= t_y]$;
%   \item $C\forces\exists x y. \phi$ if{f} there exists $U \covs C$
%     such that for all $f : D \to C$ in $U$ there exist
%     $t_x,t_y \in \Tm(D)$ with $D\forces\phi [f, x:= t_x, y:= t_y]$;
%   \item $C\forces\forall x y.\, \phi\to\psi$ if{f} for all
%     $g : D \to C$ and for all $t_x\in \Tm(D)$,
%     $D\forces\psi [g, x:= t_x]$ whenever $D\forces\phi [g, x:= t_x]$.
%  \end{enumerate}
% \end{lemma}
% \begin{proof}
%   All proofs from left to right are trivial. Conversely, by iterating
%   the definition of forcing, $C\forces\forall x y. \phi$ if{f} for all
%   $g : D \to C$ and for all $t_x \in \Tm(D)$, for all $h : E \to D$
%   and for all $t_y \in \Tm(E)$ we have
%   $E\forces\phi [g, x:= t_x][h,y:= t_y]$. The latter composition of
%   substitutions is $[gh, x:= t_x h,y:= t_y]$, with $gh : E \to C$ and
%   $t_x h, t_y \in \Tm(E)$.  Now the first point follows
%   immediately. We omit the remaining proofs, which are very similar.
% \end{proof}

Sometimes, the case of the universal quantifier simplifies as follows.

\begin{lemma}\label{lem:forcing_forall}
  For $\Cts$, and hence in case of a purely relational signature also for $\Cvs$,
  we have for all conditions $(X;A)$ and formulas
  $\phi$ (possibly with $=$) that
  $(X;A)\forces\forall x.\phi$ if{f} $(X,x;A)\forces\phi$.
\end{lemma}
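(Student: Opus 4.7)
The plan is to use the term-substitution structure of $\Cts$ directly, without induction on $\phi$. The key observation is that in $\Cts$ (and, since the signature is purely relational, also in $\Cvs$) there is a canonical morphism $e : (X,x;A)\to (X;A)$ restricting to the identity on $X$, and moreover any morphism $f : D \to (X;A)$ in $\Cts$ can be extended to a morphism $g : D \to (X,x;A)$ in $\Cts$ by choosing an arbitrary term $t\in\Tm(D)$ for the image of $x$. This latter extension is exactly what is not available in $\Crn$ (injectivity may fail) nor in $\Cvs$ with function symbols (the chosen $t$ may not be a variable), which is why the lemma is restricted to $\Cts$.

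For the forward direction, assume $(X;A)\forces \forall x.\phi$. Apply the definition of forcing a universal quantifier to the morphism $e : (X,x;A)\to (X;A)$ and to the term $t := x\in \Tm(X,x;A)$. Since $e$ restricts to the identity on $X$ and $x$ is substituted for $x$, the formula $\phi[e, x:=x]$ is just $\phi$, so we obtain $(X,x;A)\forces\phi$ directly.

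For the backward direction, assume $(X,x;A)\forces\phi$. Given any $f : D \to (X;A)$ in $\Cts$ and any $t\in\Tm(D)$, define a term substitution $g : D \to (X,x;A)$ by setting $yg := yf$ for $y\in X$ and $xg := t$. This is a well-defined morphism in $\Cts$ because the atoms in $A$ mention only variables from $X$, so $Ag = Af \subseteq \Fact(D)$. By monotonicity of the forcing relation along $g$ we get $D \forces \phi g$, and by construction $\phi g = \phi[f, x:=t]$. Since $f$ and $t$ were arbitrary, this is exactly the clause needed to conclude $(X;A)\forces\forall x.\phi$. The equality clause from Remark~\ref{rem:forcing_equality} poses no difficulty because the whole argument is a direct application of monotonicity, not an induction on $\phi$.

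The $\Cvs$ statement in the purely relational case is then immediate: as observed after Definition~\ref{def:Cvarsubst}, in a purely relational signature $\Cts$ and $\Cvs$ coincide, so the result transfers. The only mildly delicate point is to be careful about name conflicts (assume $x\notin X$, using de Bruijn indices if necessary, as discussed after Definition~\ref{def:Crenaming}) so that the condition $(X,x;A)$ and the extension of $f$ by $xg := t$ are unambiguously defined.
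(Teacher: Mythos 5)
Your proposal is correct and matches the paper's own direct proof: the forward direction instantiates the $\forall$-clause at the canonical map $e:(X,x;A)\to(X;A)$ with $t:=x$, and the backward direction extends an arbitrary $f:D\to(X;A)$ to $[f,x:=t]:D\to(X,x;A)$ and applies monotonicity. The paper additionally notes that the lemma is an instance of the representable case of Lemma~\ref{lem:forcing-quant} (since $(X,x;A)$ is the product of $(X;A)$ with $(x;)$), but its direct argument is the one you gave.
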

\begin{proof}
  This is just the last point of Lemma~\ref{lem:forcing-quant} since
  the assumptions imply that $M=\as\yo(x;)$ and the product of $(X;A)$
  with $(x;)$ is $(X,x;A)$ (we may assume $x\notin X$). We can also
  give a direct proof as follows:
  
  The left to right direction is immediate using the inclusion $X
  \hookrightarrow X\sor\{x\}$. For the other direction, assume
  $(X,x;A)\forces\phi$ and let $f : (Y;B)
  \to (X;A)$ in $\Cts$ be given together with a term $t \in \Tm(Y;B)$.
  We then have a commutative triangle
  \[
    \begin{tikzcd}
      (Y;B)\arrow{dr}{f}\arrow{d}[swap]{[f,x:=t]} & \\
      (X,x;A)\arrow{r} & (X;A)
    \end{tikzcd}
  \]
  so $(Y;B)\forces \phi[f,x:=t]$ by monotonicity.
  In case of $\Cvs$ with a purely relational signature, $t$ is a variable in $Y$
  and we can use the same commutative triangle.
\end{proof}
Note that in the case of $\Crn$ and a purely relational signature this
argument would fail in the case of $t$ being a variable in the image
of $f$, because $[f,x:=t]$ would be not be injective.

We mentioned already in Section~\ref{sec:PrelimSheafTopos} that the
forcing semantics is sound with respect to intuitionistic provability.
Let us here note that this is with respect to provability that handles
an empty domain in a correct way.
For example, let $T$ be the empty theory over
a signature containing a constant $c$.  Consider the trivial covering
defined by $(;) \covd \set{\id_{(;)} : {(;)} \to {(;)}}$.  We have
$(;)\forces\top[x=c]$, so by the definition of forcing
$(;)\forces \exists x.\top$.  The latter does not hold if the
signature does not contain a constant.  This means that we have to use
a logic that is careful about $\exists$-introduction, so that
$\exists x.\top$ is not provable if there is no constant in the
signature.  Similar care has to be taken in connection with $\forall$.
Proof systems that can handle empty domains can be found in
\cite{LambekScott1986,Elephant,MostowskiJSL1951,PeremansMC1949}.  One
way is to define triples $\Gamma\ndX\phi$ where $X$ is a set of
variables and $\Gamma,\phi$ are formulas with all free variables in
$X$.  The rules for the propositional connectives are as usual, and
the rules for the quantifiers are as follows.
\begin{enumerate}

\item\label{rule:forall_intro}
$\Gamma\ndX\forall x.\phi$ if $\Gamma\ndXx\phi$ and $x$ not free in $\Gamma$

\item $\Gamma\ndX\exists x.\phi$ if $\Gamma\ndX\phi[x:=t]$ and $t\in \Tm(X)$

\item $\Gamma\ndX\phi[x:=t]$ if $\Gamma\ndX\forall x.\phi$ and $t\in \Tm(X)$

\item $\Gamma\ndX\phi$ if $\Gamma\ndX\exists x.\psi$ and $\Gamma,\psi\ndXx\phi$ 
and $x$ not free in $\Gamma,\phi$
\end{enumerate}
We shall prove soundness for the full class of possibly infinitary formulas,
including (small) infinitary disjunctions and conjunctions, over
the signature including equality.
We add the following rules for introduction and elimination of
infinitary disjunctions and conjunctions:
\begin{enumerate}\addtocounter{enumi}{4}
\item 
$\Gamma\ndX\bigvee_{i\in I}\phi_i$ if $\Gamma\ndX\phi_i$ for some $i\in I$.
\item\label{rule:inf_disj_elim}
$\Gamma\ndX\psi$ if $\Gamma\ndX\bigvee_{i\in I}\phi_i$ and
$\Gamma,\phi_i \ndX \psi$ for all $i\in I$.
\item\label{rule:inf_conj_intro}
$\Gamma\ndX\bigwedge_{i\in I}\phi_i$ if $\Gamma\ndX\phi_i$ for all
$i\in I$.
\item
$\Gamma\ndX\phi_i$ for all $i\in I$ if $\Gamma\ndX\bigwedge_{i\in I}\phi_i$.
\end{enumerate}
Note that by adopting rules \ref{rule:inf_disj_elim} and
\ref{rule:inf_conj_intro} we allow proofs to be infinitary as well.

\begin{theorem}[Soundness]\label{thm:soundness}
  Let $\C$ be one of the categories $\Crn,\Cvs$ or $\Cts$ and let
  ${\covd}$ be a coverage with forcing relation $\forces$.  If
  $\Gamma\ndX\phi$, then for any condition $C$ and any environment
  $\rho : X \to \Tm(C)$ we have $C \forces\phi\rho$ if
  $C \forces\Gamma\rho$, that is, if
  $C \forces\psi\rho$ for all $\psi \in \Gamma$.
\end{theorem}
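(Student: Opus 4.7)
The plan is to proceed by induction on the derivation of $\Gamma\ndX\phi$, checking that each inference rule is preserved by the forcing relation. The three standing tools are: monotonicity (for passing from $C$ to $D$ along $f:D\to C$), local character (for collecting forced instances on a cover), and the fact that for any $f:D\to C$ and environment $\rho:X\to\Tm(C)$, the composite $\rho f:X\to\Tm(D)$ satisfies $(\phi\rho)f=\phi(\rho f)$, so that $D\forces\phi\rho f$ means exactly $D\forces\phi(\rho f)$. I also use throughout that if $x$ is not free in $\phi$ then $\phi[\rho,x:=t]=\phi\rho$, so adding a fresh variable to the environment does not change the interpretation.

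First I would dispatch the assumption rule and the propositional rules, which match clause-by-clause against Definition~\ref{def:forcing}. The interesting ones are $\to$-introduction and $\lor$-elimination. For $\to$-introduction, from $\Gamma,\phi_1\ndX\phi_2$ and $C\forces\Gamma\rho$, I need that for every $f:D\to C$ with $D\forces\phi_1(\rho f)$, also $D\forces\phi_2(\rho f)$; by monotonicity $D\forces\Gamma(\rho f)$, so the induction hypothesis applied at $D$ with environment $\rho f$ delivers the conclusion. For $\lor$-elimination (and its infinitary variant, rule~\ref{rule:inf_disj_elim}), from $\Gamma\ndX\bigvee_{i\in I}\phi_i$ we get a cover $U\covs C$ on which some disjunct is forced; then for each $f:D\to C$ in $U$, monotonicity of $\Gamma$ plus the induction hypothesis on $\Gamma,\phi_i\ndX\psi$ yields $D\forces\psi(\rho f)$, and local character gives $C\forces\psi\rho$. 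The infinitary conjunction rules are immediate from the clause for $\bigwedge$.

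For the quantifier rules I treat $\forall$-introduction and $\exists$-elimination; the other two are straightforward from the definition. For $\forall$-introduction, suppose $\Gamma\ndXx\phi$ with $x$ not free in $\Gamma$, and $C\forces\Gamma\rho$. To show $C\forces(\forall x.\phi)\rho$, fix $f:D\to C$ and $t\in\Tm(D)$; I must show $D\forces\phi[\rho f,x:=t]$. By monotonicity $D\forces\Gamma(\rho f)$, and since $x$ is not free in $\Gamma$ this equals $D\forces\Gamma[\rho f,x:=t]$. Applying the induction hypothesis at $D$ with environment $[\rho f,x:=t]:X\cup\{x\}\to\Tm(D)$ gives the desired forcing. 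For $\exists$-elimination, from $\Gamma\ndX\exists x.\psi$ we obtain a cover $U\covs C$ such that for each $f:D\to C$ in $U$ there exists $t\in\Tm(D)$ with $D\forces\psi[\rho f,x:=t]$; combining with $D\forces\Gamma[\rho f,x:=t]$ (monotonicity plus $x\notin\mathrm{FV}(\Gamma)$) and the induction hypothesis on $\Gamma,\psi\ndXx\phi$, we get $D\forces\phi[\rho f,x:=t]=\phi(\rho f)$ because $x\notin\mathrm{FV}(\phi)$. Local character then concludes $C\forces\phi\rho$.

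The main obstacle is bookkeeping around the side conditions on fresh variables in $\forall$-introduction and $\exists$-elimination; this is precisely where the clause structure of Definition~\ref{def:forcing}, combined with the harmlessness of adjoining fresh variables to the environment, matches the syntactic freshness hypotheses. Everything else is a mechanical pass through the clauses, with monotonicity and local character used exactly where the rule involves quantification over extensions $f:D\to C$ or covers $U\covs C$.
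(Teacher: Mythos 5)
Your proof is correct and follows essentially the same route as the paper's: induction on the derivation, using monotonicity and local character, with the same handling of the key cases ($\forall$-introduction via the extended environment $[\rho f, x:=t]$ and disjunction elimination via a cover plus local character). The paper's proof likewise only works out two sample rules and appeals to the remaining cases being routine, so your level of detail matches as well.
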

\begin{proof}
  This of course follows from soundness of intuitionistic logic in sheaf
  semantics (e.g., \cite[D1.3.2]{Elephant} which uses a slightly
  different proof calculus). There is also a direct proof 
  by induction on the derivation of $\Gamma\ndX\phi$
  (for the case of coherent logic and $\Crn$, cf.\ \cite{CoquandLMPS2005}).
  We provide two samples of induction steps: 
  $\forall$-introduction (rule \ref{rule:forall_intro} above), and
  $\bigvee_{i\in I}$-elimination (rule \ref{rule:inf_disj_elim} above).
  In both steps we assume $C$ is a condition and $\rho : X \to \Tm(C)$
  such that $C \forces\Gamma\rho$ (A).
  If $f:D\to C$, then $\rho f : X \to \Tm(D)$ is defined by
  $(\rho f)(x) = \rho(x) f$ so that $(\phi\rho)f = \phi(\rho f)$ for all $\phi$.

Rule \ref{rule:forall_intro}.
  Assume soundness has been proven for $\Gamma \ndXx \phi$, where
  $x$ is not free in $\Gamma$ (IH). We must prove $C\Vdash(\forall x. \phi)\rho$.
  Note that $x$ is not in the domain of $\rho$, so $(\forall x. \phi)\rho$ is $\forall x.(\phi\rho)$.
  By the definition of forcing we have to prove $D\Vdash(\phi\rho)[f,x:=t]$
  for all $f : D\to C$ and $t\in \Tm(D)$. Let $f : D\to C$ and $t\in \Tm(D)$.
  Define $\rho'$ to extend $\rho f$ with value $t$ for variable $x$, so
  $\rho' : X,x \to \Tm(D)$. By (A) and the monotonicity of forcing we get
 $D\Vdash\Gamma\rho f$. Hence $D\Vdash\Gamma\rho'$,
 since $\Gamma\rho'$ is $\Gamma\rho f$ (as $x$ not free in $\Gamma$).
 By (IH) it follows that $D\Vdash\phi\rho'$, which is $D\Vdash(\phi\rho)[f,x:=t]$.

Rule \ref{rule:inf_disj_elim}.
  Assume soundness has been proven for $\Gamma\ndX\bigvee_{i\in I}\phi_i$
  and $\Gamma,\phi_i \ndX \psi$ for all $i\in I$ (IH).
  We must prove $C\Vdash\psi\rho$.
  By (IH) we get $C \Vdash \bigvee_{i\in I}\phi_i \rho$, 
  which by the definition of forcing means that there exists $U \covs C$ 
  such that  for every $f : D\to C$ in $U$ there exists $i\in I$ with $D \Vdash \phi_i\rho f$.
  Let  $f : D\to C$ in $U$, then $D \Vdash\Gamma\rho f$ by 
  (A) and the monotonicity of forcing, 
  and hence $D \Vdash(\Gamma,\phi_i)\rho f$ for some $i\in I$.
  For any such $i$ we get $D \Vdash\psi\rho f$ by (IH). 
  It follows by local character that $C\Vdash\psi\rho$.
\end{proof}

\begin{corollary}\label{cor:soundness}
  Using the coverage $\covd_T$ from Definition~\ref{def:cov_T} we have
  that $(;) \forces \phi$ if $T\ndE \phi$, for all (not necessarily coherent) sentences $\phi$.
\end{corollary}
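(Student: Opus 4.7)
The plan is to derive this corollary as an immediate consequence of the Soundness theorem (Theorem~\ref{thm:soundness}) specialised to the coverage $\covd_T$, combined with the fact, already established in Section~\ref{sec:canonical-models}, that the canonical model $M$ is a model of $T$.

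First I would read $T \ndE \phi$ as $\Gamma_T \ndE \phi$, where $\Gamma_T$ is the collection of (closed) axioms of $T$: a derivation of $\phi$ from $T$ in the empty variable context uses the axioms of $T$ as hypotheses together with the intuitionistic rules listed in this section, including the infinitary rules for $\bigwedge$ and $\bigvee$ from clauses \ref{rule:inf_disj_elim}--\ref{rule:inf_conj_intro}.

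The key step is to verify that every axiom $\theta \in \Gamma_T$ is forced at the terminal condition $(;)$ under the coverage $\covd_T$. This has essentially already been carried out at the end of Section~\ref{sec:canonical-models}: for an axiom $\theta = \forall \vec x.\,(\phi_0 \to \exists\vec x_1.\phi_1 \lor \cdots \lor \exists\vec x_k.\phi_k)$, one shows $(;) \forces_M \theta$ by unfolding the forcing clauses of Theorem~\ref{thm:forcing} and, for each instantiation of $\vec x$ in a condition $(X;A)$ containing $\phi_0$, producing the required cover $\{e_i\}_{1\le i\le k} \covs (X;A)$ using the step case of Definition~\ref{def:cov_T}. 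By Theorem~\ref{thm:forcing_agree} the structural forcing of Definition~\ref{def:forcing_structure} agrees with the syntactic forcing of Definition~\ref{def:forcing}, so $(;) \forces \theta$ in the latter sense as well.

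Finally, I would apply Theorem~\ref{thm:soundness} with $\Gamma = \Gamma_T$, $X = \emptyset$, $C = (;)$, and the trivial environment $\rho$. The hypothesis $(;) \forces \Gamma_T \rho$ reduces to the already-established $(;) \forces \theta$ for every axiom $\theta \in \Gamma_T$, and since $\phi$ is a sentence we have $\phi\rho = \phi$, so the conclusion is exactly $(;) \forces \phi$. There is no real obstacle: the two substantive inputs, namely that the canonical model is a $T$-model and the soundness of the proof calculus, have been done; this corollary simply assembles them. The whole point of introducing the coverage $\covd_T$ was precisely to make the axioms of $T$ forced at $(;)$.
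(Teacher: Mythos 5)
Your proposal is correct and follows exactly the paper's own route: the paper's proof of this corollary is a one-line appeal to Theorem~\ref{thm:soundness} together with the fact, established at the end of Section~\ref{sec:canonical-models}, that the axioms of $T$ are forced relative to $\covd_T$. You have merely spelled out the instantiation ($\Gamma=\Gamma_T$, $X=\emptyset$, $C=(;)$, trivial $\rho$) and the appeal to Theorem~\ref{thm:forcing_agree}, all of which is implicit in the paper's argument.
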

\begin{proof}
  By Theorem~\ref{thm:soundness} and the fact that $T$ is forced
  relative to $\covd_T$ as in Section~\ref{sec:canonical-models}.
\end{proof}

We now prove the converse of Corollary~\ref{cor:soundness} for
the class of generalized geometric implications,
cf.~Def.~\ref{def:generalized-geometric}.
We first prove the analogue of local character for provability.

\begin{lemma}\label{lem:local_prov}
  Let $\C$ be one of the categories $\Crn,\Cvs$ or $\Cts$ and $T$ a coherent
  theory with corresponding coverage relation $\covd$.  Assume
  $(X;A) \covd U$ and let formula $\psi$ with all free variables in
  $X$ be such that for all $f : (Y;B)\to(X;A)$ in $U$ we have
  $T,B \ndY \psi f$. Then $T,A \ndX \psi$.
\end{lemma}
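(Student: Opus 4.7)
The plan is to prove the lemma by induction on the derivation of $(X;A)\covd U$ according to Definition~\ref{def:cov_T}, splitting into the base case (an isomorphism) and the step case (via an axiom instance of $T$). Throughout, I rely on a standard substitution lemma for the proof calculus (if $\Gamma\ndX\phi$ and $\sigma:X\to\Tm(Y)$, then $\Gamma\sigma\vdash_Y\phi\sigma$) and on weakening.

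In the base case, $U=\set{f}$ for an isomorphism $f:(X';A')\to(X;A)$, and the hypothesis gives $T,A'\vdash_{X'}\psi f$. I would apply the substitution lemma with the substitution $f^{-1}:X'\to\Tm(X)$ underlying the inverse morphism $f^{-1}:(X;A)\to(X';A')$. Since $f\circ f^{-1}=\id_X$ as substitutions, we have $\psi f f^{-1}=\psi$, and since $A'f^{-1}\subseteq A$ by the morphism condition on $f^{-1}$, weakening gives $T,A\ndX\psi$.

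For the inductive step, suppose $(X;A)\covd\bigcup_{i=1}^n e_i U_i$ is derived from an instance $\phi_0\to\bigvee_{i=1}^n\exists\vec x_i.\phi_i$ of a $T$-axiom with $\phi_0\subseteq A$, together with premises $(X,\vec x_i;A,\phi_i)\covd U_i$ for each $i$ (with the $\vec x_i$ assumed disjoint from $X$). For any $g:(Y;B)\to(X,\vec x_i;A,\phi_i)$ in $U_i$, the composite $e_i g$ lies in $\bigcup_j e_j U_j$, and since $e_i$ restricts to the identity on $X$ and $\psi$'s free variables lie in $X$, we have $\psi(e_i g)=\psi g$. The lemma's hypothesis applied to $e_i g$ thus yields $T,B\vdash_Y\psi g$, so the induction hypothesis applied to $(X,\vec x_i;A,\phi_i)\covd U_i$ gives $T,A,\phi_i\vdash_{X,\vec x_i}\psi$.

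It remains to assemble these $n$ provabilities. The axiom instance, combined with $\phi_0\subseteq A$, yields $T,A\ndX\bigvee_{i=1}^n\exists\vec x_i.\phi_i$. For each $i$, iterated applications of $\exists$-elimination (rule \ref{rule:forall_intro} for $\exists$ in the excerpt, one per variable in $\vec x_i$, using that $\vec x_i$ is not free in $T$, $A$, or $\psi$) combined with the induction hypothesis $T,A,\phi_i\vdash_{X,\vec x_i}\psi$ yield $T,A,\exists\vec x_i.\phi_i\ndX\psi$. A final $\bigvee$-elimination then produces $T,A\ndX\psi$, completing the induction. The main obstacle I expect is bookkeeping in the base case, where one must check carefully that the inverse substitution $f^{-1}$ actually satisfies the required properties (uniform across $\Crn,\Cvs,\Cts$, since in all three categories isomorphisms are forced to be variable bijections); the inductive step is otherwise a routine application of the introduction/elimination rules.
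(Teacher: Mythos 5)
Your proposal is correct and follows essentially the same route as the paper's own proof: induction on the derivation of $(X;A)\covd U$, with the base case handled by the observation that isomorphisms are bijective renamings, and the step case by deriving $T,A,\phi_i\vdash_{X,\vec x_i}\psi$ from the induction hypotheses and then combining the axiom instance with $\exists$-elimination and disjunction elimination. The paper merely calls the base case ``trivial'' where you spell out the inverse-substitution argument, and your parenthetical reference to rule~\ref{rule:forall_intro} should point to the $\exists$-elimination rule instead, but neither affects the substance.
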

\begin{proof}
  By induction on $(X;A) \covd U$.  The base case is trivial since the
  substitution underlying an isomorphism is a bijective renaming.  For
  the induction step, let $(X;A) \covd U$ be inferred by
  \[
    \frac{(X,\vec x_1; A,\phi_1) \covd U_1 \quad \ldots\quad
      (X,\vec x_n; A,\phi_n) \covd U_n}
    {(X;A) \covd \bigcup_{1\leq i\leq n} (e_i U_i) = U}(*)
  \]
  using an axiom
  $\phi_0 \to \exists\vec x_1.\phi_1 \lor\dots\lor \exists\vec
  x_k.\phi_k$ with $\phi_0 \subseteq A$.  Assume $T,B \ndY \psi f$ for
  all $f : (Y;B)\to (X;A)$ in $U$, that is, $T,B \ndY \psi e_i h$ for
  all $1\leq i\leq n$ and all $h : (Y;B)\to (X,\vec x_i; A,\phi_i)$ in
  $U_i$.  By induction hypotheses it follows for all
  $1\leq i\leq n$ that $T,A,\phi_i \ndXxi \psi$ ($e_i$ does nothing on
  $X$, so $\psi=\psi e_i$). The axiom used to infer $(X;A)\covd U$
  yields
  $T,A \ndX \exists\vec x_1.\phi_1 \lor\cdots\lor \exists\vec
  x_k.\phi_k$. Note that the $\vec x_i$ do not occur in $T,A,\psi$, so
  from case distinction and existential elimination it follows that
  $T,A \ndX \psi$.
\end{proof}

Now we come to our first completeness result, and here it is
imperative that equality is not present in the language.
\begin{theorem}\label{thm:completeness}
  Let $\C$ be one of the categories $\Crn,\Cvs$ of $\Cts$ and $T$ a
  coherent theory with corresponding forcing relation $\forces$. Then
  for any generalized geometric implication $\phi$ in variables $Y$, any condition $(X;A)$,
  and any environment $\rho:Y\to\Tm(X)$
  we have $T,A \ndX \phi\rho$ if $(X;A) \Vdash\phi\rho$.
\end{theorem}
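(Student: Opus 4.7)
We proceed by structural induction on the generalized geometric implication $\phi$, using Definition~\ref{def:forcing} (justified for our canonical models by Theorem~\ref{thm:forcing_agree}) and taking Lemma~\ref{lem:local_prov} as the key tool: it mirrors the local character clauses of forcing and converts local provability at the members of a cover into global provability.

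For the base case of a predicate atom $p(\vec t\,) = \phi\rho$, the hypothesis $(X;A)\forces p(\vec t\,)$ supplies a cover $U\covs(X;A)$ such that $p(\vec t\,)f\in\Fact(D)$ for every $f:D\to(X;A)$ in $U$; taking $\psi=p(\vec t\,)$ in Lemma~\ref{lem:local_prov} yields $T,A\ndX p(\vec t\,)$. The cases $\top$ and $\bot$ are handled directly (the latter by Lemma~\ref{lem:local_prov} with vacuous premises, since $(X;A)\covd\es$). The conjunction case reduces to the induction hypothesis and $\land$-introduction. For disjunction $\bigvee_{i\in I}\phi_i$ and existential $\exists x.\phi$, the pattern is uniform: apply the induction hypothesis at each $f:D\to(X;A)$ in the forcing cover, use the corresponding introduction rule to derive the full formula over $D$, and then invoke Lemma~\ref{lem:local_prov}.

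For the universal case $\forall x.\phi$, rename $x$ so that $x\notin X\cup Y$. The inclusion $e:(X,x;A)\to(X;A)$ exists in all three of $\Crn,\Cvs,\Cts$, so instantiating the forcing definition at $e$ and $t=x\in\Tm(X,x;A)$ gives $(X,x;A)\forces\phi\rho'$, where $\rho'$ extends $\rho$ by $x\mapsto x$. By the induction hypothesis $T,A\ndXx\phi\rho'$, and rule~\ref{rule:forall_intro} concludes $T,A\ndX(\forall x.\phi)\rho$ since $x$ is not free in $T$ or $A$.

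The main obstacle, and the reason for restricting implications to atomic antecedents in generalized geometric implications, is the implication case $\alpha\to\phi$. If $\alpha\rho$ is a predicate atom, we form the enlarged condition $(X;A,\alpha\rho)$, which tautologically forces $\alpha\rho$; applying the hypothesis $(X;A)\forces(\alpha\to\phi)\rho$ to the canonical morphism $(X;A,\alpha\rho)\to(X;A)$ yields $(X;A,\alpha\rho)\forces\phi\rho$, whence the induction hypothesis gives $T,A,\alpha\rho\ndX\phi\rho$ and $\to$-introduction finishes the case. The trick would fail for non-atomic antecedents, which cannot be absorbed as a single fact into a condition. The subcases $\alpha=\top$ and $\alpha=\bot$ are dispatched directly, and $\alpha=(s=t)$ does not arise since the language is equality-free, as highlighted in the remark preceding the theorem.
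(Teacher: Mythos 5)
Your proposal is correct and follows essentially the same route as the paper's own proof: induction over the grammar of generalized geometric implications, with Lemma~\ref{lem:local_prov} converting local provability over a cover into provability at $(X;A)$ for the atomic, disjunction and existential cases, the inclusion $(X,x;A)\to(X;A)$ for the universal case, and the extended condition $(X;A,\alpha)$ with the identity substitution for implications with atomic antecedent. The only differences are cosmetic (your explicit treatment of $\bot$ via the empty cover and of the subcases of $\alpha$ is slightly more detailed than the paper's).
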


\begin{proof}
  By induction on $\phi$
  as generated by the grammar in Definition~\ref{def:generalized-geometric}.

   The case of $\top$ is trivial. For the other atomic cases,
    if $(X;A) \forces \alpha$, then
    there exists $U\covs(X;A)$ such that for any $f :
    (Y;B)\to(X;A)$ in $U$ we have $\alpha f \in B$. Obviously, we then
    have $T,B \ndY \alpha f$, so $T,A \ndX \phi$ by
    Lemma~\ref{lem:local_prov}.

   The induction step for $\phi_1 \land \phi_2$ is obvious.

   Assume the result has been proven for $\phi_i$ for $i\in I$ (IH),
   and let $(X;A) \Vdash\bigvee_{i\in I} \phi_i$.
   Then there exists $U \covs (X;A)$ such that for any $f: (Y;B)\to (X;A)$
    in $U$ we have $(Y;B) \forces \phi_i f$ for some $i\in I$.
    By the induction hypothesis we get for every $f$ in
    $U$ that $T,B \ndY\phi_i f$ for some $i\in I$,
    and hence $T,B \ndY \bigl(\bigvee_{i\in I} \phi_i\bigr) f$. 
    Now $T,A \ndX \bigvee_{i\in I}\phi_i$
    follows by Lemma~\ref{lem:local_prov}.

    Assume the result has been proven for $\phi$ (IH),
   and let $(X;A) \Vdash \forall x.\,\phi$. Then
   $(X,x;A) \Vdash \phi$, so by  the induction hypothesis we get
    $T,A \ndXx \phi$, and hence $T,A \ndX \forall x.\,\phi$ since $x$ does not
    occur in $T,A$.

   Again assume the result has been proven for $\phi$ (IH),
   and let $(X;A) \Vdash \exists x.\,\phi$. 
   Then there exists $U \covs (X;A)$ such
    that for any $f: (Y;B)\to (X;A)$ in $U$ we have
    $(Y;B) \forces \phi f [x:=t_x]$ for some
    $t_x \in \Tm(Y)$.  By the induction hypothesis we get
    $T,B \ndY \phi f [x:=t_x]$ and hence
    $T,B \ndY (\exists x.\,\phi) f$. Now $T,A \ndX \phi$ follows by
    Lemma~\ref{lem:local_prov}.
  
   Assume the result has been proven for $\alpha$ and $\phi$ (IH),
   and let $(X;A) \Vdash \alpha\to\phi$. We have to prove
   $T,A \ndX \alpha\to\phi$, and it clearly suffices to prove
   that $T,A,\alpha \ndX \phi$. The latter follows easily from
   the induction hypothesis for $\phi$, the definition of forcing,
   and the fact that $\id_X : (X;A,\alpha) \to (X;A)$.
\end{proof}

\begin{corollary}[Completeness]\label{cor:coherent_complete}
  Let conditions be as above. Then $T\ndE \phi$ if $(;) \forces \phi$,
  for all generalized geometric sentences $\phi$.
\end{corollary}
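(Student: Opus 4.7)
The plan is to observe that this corollary is an immediate specialization of Theorem~\ref{thm:completeness} to the case of sentences evaluated at the terminal condition. Concretely, I would apply Theorem~\ref{thm:completeness} with $(X;A) = (;)$, with the set of free variables $Y = \emptyset$, and with the (unique) empty environment $\rho : \emptyset \to \Tm(;)$.

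Since $\phi$ is a sentence, it has no free variables, so $\phi\rho = \phi$, and the hypothesis $(X;A) \Vdash \phi\rho$ of Theorem~\ref{thm:completeness} becomes exactly $(;) \Vdash \phi$. The conclusion $T, A \ndX \phi\rho$ becomes $T, \emptyset \vdash_{\emptyset} \phi$, which is by definition $T \ndE \phi$ (recalling the notation $\ndE = \vdash_\emptyset$ from the macro list). Thus the implication $(;) \Vdash \phi \Rightarrow T \ndE \phi$ follows at once.

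Together with Corollary~\ref{cor:soundness}, which gives the converse, we obtain that for generalized geometric sentences $\phi$, forcing at the terminal condition in any of the three canonical models coincides with intuitionistic provability from $T$. There is no real obstacle here since all of the work has been done in Theorem~\ref{thm:completeness}; the only thing to verify is that the bookkeeping about empty variable contexts and empty environments goes through, which is straightforward given that sentences have no free variables.
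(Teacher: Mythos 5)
Your proposal is correct and matches the paper's intent exactly: the corollary is stated without a separate proof precisely because it is the immediate specialization of Theorem~\ref{thm:completeness} to the terminal condition $(;)$ with empty variable set and empty environment, which is what you carry out. The bookkeeping you check (sentences have no free variables, $T,\emptyset \vdash_{\emptyset}\phi$ is $T\ndE\phi$) is all that is needed.
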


\begin{remark}
  The completeness result depends crucially on the fact that the
  forcing conditions \emph{do not allow equality facts}.  E.g.,
  $\neg(0=1)$ is forced but not provable, for the empty theory with
  two constants, $0$ and $1$; cf.\ Example~\ref{exa:bi-pointed} below.
\end{remark}

To capture the coherent implications with $=$ that hold in our forcing
models, we let $T^+$ denote $T^=$ together with the following (coherent)
\emph{constructor axioms} ensuring all function symbols behave like
constructors:
\begin{enumerate}[label=(\Roman*)]
\item\label{ax:ts1} distinct function symbols $f,g$ have disjoint
  values: $f(\vec x) = g(\vec y) \to \bot$.
\item\label{ax:ts2} function symbols are injective:
  $f(\vec x) = f(\vec y) \to \vec x = \vec y$.
\item\label{ax:ts3} there are no proper cycles:
  $x = f(\vec s) \to \bot$ whenever $x$ occurs anywhere in the
  sequence of terms $\vec s$.
\end{enumerate}
It is easy to see that these are all forced in our three models, which
thus model $T^+$.

Observe that the base case in the proof of
Theorem~\ref{thm:completeness} also holds with $\alpha$ an equality
atom and $T$ a coherent theory in logic with equality.  (The induction
step $\alpha\to\phi$ fails with $\alpha$ an equality atom since such
$\alpha$'s are not allowed in conditions.) As a consequence,
Theorem~\ref{thm:completeness} also holds for coherent theories $T^+$
and geometric implications with $=$, as long as the latter have no
equations in a negative position.  This allows us to handle geometric
sentences (but not all generalized geometric implications) using the
(admissible) Eriksson-Girard elimination rule for equality, i.e., the
inference
\[
  \infer{\text{\ref{ax:ts1}--\ref{ax:ts3}},\Gamma, \vec s=\vec t \ndX \phi}{
    \text{\ref{ax:ts1}--\ref{ax:ts3}},\Gamma f \ndY \phi f
  }
\]
whenever $f : X \to \Tm(Y)$ is the most general unifier of the equations
$\vec s=\vec t$, cf.~\cite{Eriksson1992,Girard1992}.

\begin{theorem}\label{thm:plus-completeness}
  For all geometric sentences $\phi$ in logic with equality,
  we have that $T^+ \ndE \phi$ if $(;) \forces \phi$, for
  \emph{any} of our forcing models.
\end{theorem}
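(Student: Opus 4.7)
The plan is to reduce to the extension of Theorem~\ref{thm:completeness} noted in the observation preceding the statement---namely, that completeness holds for $T^+$ and geometric implications with equality, provided no equations occur in negative position---by using the Eriksson-Girard elimination rule to unify away equations in the antecedent.

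First, I would put $\phi$ (without loss of generality) in the canonical form $\forall \vec x.\,(\phi_0 \to \bigvee_{i\in I} \exists \vec y_i.\,\phi_i)$, where $\phi_0$ and each $\phi_i$ are finite conjunctions of atoms possibly containing equations, and $I$ is small. By $\forall$- and $\to$-introduction it suffices to establish $T^+, \phi_0 \ndxx \bigvee_{i\in I} \exists \vec y_i.\,\phi_i$. I would then split $\phi_0$ as $E \cup F$ into its equational and non-equational parts, and distinguish cases on whether $E$ is unifiable.

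When $E$ admits no most general unifier---because of a clash of function symbols or an occurs check---axiom \ref{ax:ts1} or \ref{ax:ts3} derives $\bot$ from $E$ in $T^+$, and the goal follows by $\bot$-elimination. When an mgu $f : \vec x \to \Tm(Y)$ exists, the Eriksson-Girard rule reduces the goal to $T^+, Ff \ndY (\bigvee_{i\in I} \exists \vec y_i.\,\phi_i)f$. Since $Ff$ contains no equations, $(Y;Ff)$ is a bona fide condition in our sites, and the extension of Theorem~\ref{thm:completeness} to geometric implications with equations only in positive position further reduces the task to showing $(Y;Ff) \Vdash (\bigvee_{i\in I} \exists \vec y_i.\,\phi_i)f$.

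For this I would invoke the hypothesis $(;) \Vdash \phi$ at the condition $(Y;Ff)$ with instantiation $f$ for $\vec x$, reducing matters to the verification $(Y;Ff) \Vdash \phi_0 f$. Each atom in $Ff$ lies in $\Fact(Y;Ff)$ and is forced via the trivial cover; each equation $s = u$ in $E$ satisfies $sf = uf$ as syntactic terms, so $(Y;Ff) \Vdash (s=u)f$ by the clause for equality in Remark~\ref{rem:forcing_equality} using the trivial cover. The main obstacle I anticipate is the careful verification that the extension of Theorem~\ref{thm:completeness} to equality atoms in positive position is indeed available: its base case goes through for equalities via Remark~\ref{rem:forcing_equality} and Lemma~\ref{lem:local_prov} (yielding a derivation in $T$, which we weaken to $T^+$), while the problematic induction step $\alpha \to \phi$ with $\alpha$ an equation is precisely the one the Eriksson-Girard reduction sidesteps.
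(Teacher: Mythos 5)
Your proposal is correct and follows essentially the same route as the paper's proof: reduce to the canonical form, case-split on unifiability of the equational antecedent (using the constructor axioms \ref{ax:ts1}--\ref{ax:ts3} in the failure case), instantiate the forcing hypothesis along the most general unifier, apply the equality-free-antecedent extension of Theorem~\ref{thm:completeness}, and finish with the Girard--Eriksson rule. The only cosmetic difference is that you carry the non-equational atoms $F$ as facts of the condition $(Y;Ff)$, whereas the paper folds them into the generalized geometric implication $\psi$ and works at $(Y;)$; both are equivalent, and your explicit verification that $\phi_0 f$ is forced (equations becoming syntactic identities under $f$) fills in a step the paper leaves implicit.
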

\begin{proof}
  Any geometric sentence over the signature of
  $T^+$ is equivalent to a conjunction of sentences of the form
  $\forall\vec x.\; \vec s=\vec t \to \psi$, where $\psi$ is a
  generalized geometric implication without any
  equations in an antecedent.

  If the equations $\vec s=\vec t$ are not unifiable, then $T^+ \ndE
  \forall\vec x. \; \vec s=\vec t\to\bot$, and we are done.

  Otherwise, let $f : X \to \Tm(Y)$ be the most general unifier. From
  \[
    (;) \forces \forall\vec x. \; \vec s=\vec t \to \psi,
  \]
  we get $(Y;)\forces (\vec s=\vec t)f \to \psi f$ simply by using the
  term substitution $f$ as instantiation for $\vec x$. Since $f$
  unifies $\vec s=\vec t$, we get $(Y;)\forces \psi f$. By the
  restriction on $\psi$, we get $T^+ \ndY \psi f$ from
  Theorem~\ref{thm:completeness}. By the Girard-Eriksson rule
  we conclude $T^+,\vec s=\vec t \ndX \psi$, as desired.
\end{proof}

\begin{remark}
  Because we assume our theory $T$ to be coherent (viz., finitary
  geometric), we note that in the completeness results,
  Theorem~\ref{thm:completeness} and
  Corollary~\ref{cor:coherent_complete}, the resulting proof is always
  finitary as well, by analyzing the rules used.

  Thus, if in Theorem~\ref{thm:plus-completeness}, $\phi$ has
  the form~\eqref{eq:coherent-implication} (where
  the disjunction is allowed to be infinite), then the proof obtained
  there is finitary.
\end{remark}

We shall see in the companion paper that $\Sh(\Cts,\covd_T)$ is in
fact the classifying topos of $T^+$.
As a hint for what happens for the model over $\Crn$, note that the
generalized geometric implication
$\forall x,y.\, x=y \lor (x=y \to \bot)$ is forced there, so we cannot
hope to extend Theorem~\ref{thm:plus-completeness} to generalized
geometric implications for this model.

\section{Examples}\label{sec:examples}

\begin{example}\label{exa:Schanuel}
  Consider the empty single-sorted signature with the theory
  $\set{\exists x.\top}$ (axiomatizing an inhabited domain). In this case,
  $\Cvs$ is equivalent to $\Fin\op$ (the opposite of the category of finite sets
  and functions) and $\Crn$ is equivalent to $\Finmon\op$ (the
  opposite of the category of finite sets and injections).
  By Theorem~\ref{cor:topos_equiv} we have that
  $\Sh(\Cvs)$ is the classifying topos of the theory $\set{\exists x.\top}^{=}$,
  or, in categorical terminology, for an inhabited object.
  The coverage on $\Crn$ includes all singleton sinks, as any
  injection between finite sets is a composition of finitely many
  embeddings that each add one element with a bijection. Hence the generated
  Grothendieck topology is the atomic one consisting of all inhabited
  sieves, and we get that $\Sh(\Crn)$ is the
  Schanuel topos, classifying infinite decidable objects,
  cf.~\cite[p.~335]{Wraith1980}.
  This example shows that the toposes $\Sh(\Crn)$ and $\Sh(\Cvs)$
  need not be equivalent.
\end{example}

\begin{example}\label{exa:bi-pointed}
  Consider the empty theory over the signature consisting of two
  constant symbols $0$ and $1$. Since the theory is empty, the Grothendieck
  topology of any of the sites considered in this example is trivial,  
  so that Example~\ref{exa:presheaftopos} applies to any of these sites.
  Using \cite[p.~25]{CosteCoste1975},
  the classifying topos can be presented as $\Psh(\C^0)$,
  where $\C^0$ is the syntactic site, see Section~\ref{sec:classifying-toposes}.
  Mapping any object $(X;A)$ of $\C^0$ to the set $X\cup\set{0,1}$
  modulo the equalities implied by $A$,
  it is easy to see that $\C^0$ is equivalent to
  $\Fin_{01}\op$ (the opposite of the category of finite bi-pointed
  sets), and this topos classifies a bi-pointed object. Meanwhile,
  $\Cvs$ and $\Crn$ are impervious to the constants, so
  $\Sh(\Cvs)=\Psh(\Cvs)$ is the object classifier and
  $\Sh(\Crn)=\Psh(\Crn)$ classifies decidable objects~\cite[D3.2.7]{Elephant}.
  Spitters~\cite{Spitters2016} noted that $\Sh(\Cts)$ classifies strictly
  bi-pointed objects (note also that $\Cts$ is equivalent to
  $\Fin_{0\ne1}\op$: the opposite of the category of finite strictly
  bi-pointed sets). This example shows that all of these toposes can
  be different (viz.~non-equivalent). Indeed, consider the string of geometric
  morphisms~\eqref{eq:comparison_morphisms} in this case (${[\C,\D]}$
  denotes the functor category):
  \[
    \begin{tikzcd}
      \Psh(\Crn) \arrow[equal]{d}\arrow{r} &
      \Psh(\Cvs) \arrow[equal]{d}\arrow{r} &
      \Psh(\Cts) \arrow[equal]{d}\arrow{r} &
      \Psh(\C^0) \arrow[equal]{d} \\
      {[\Finmon,\Set]} \arrow{r} &
      {[\Fin,\Set]} \arrow{r} &
      {[\Fin_{0\ne1},\Set]} \arrow{r} &
      {[\Fin_{01},\Set]}
    \end{tikzcd}
  \]
  Here, $\Fin\to\Fin_{0\ne1}$ is the functor that freely adjoins two new
  elements $0$ and $1$. The forgetful functors from $\Fin_{0\ne1}$ and
  $\Fin_{01}$ to $\Fin$ induce the geometric morphisms from
  $\Psh(\Cts)$ and $\Psh(\C^0)$ to $\Psh(\Cvs)$ mentioned in
  Theorem~\ref{thm:flatness}; these classify the generic
  (strictly) bi-pointed object as an object.

  Regarding the canonical models, we have in $\Psh(\C^0)$ that
  $\lnot\lnot 0=1$ using $(X;A,0=1)\to(X;A)$, 
  while in $\Psh(\Cts)$, $\Psh(\Cvs)$ and $\Psh(\Crn)$ 
  we have $\lnot 0=1$.

  In $\Psh(\Cts)$ we have $\forall x.\lnot\lnot(x=0\lor x=1)$,
  while in $\Psh(\Cvs)$ we have on the other hand
  $\lnot\forall x.\lnot\lnot(x=0\lor x=1)$. To see this, use that
  $\Tm(X)$ consists of $0,1$ and $x\in X$, plus that the term
  substitution $[x:=0]$ can be used in $\Cts$ but not in $\Cvs$.

  In $\Psh(\Cvs)$ we have $\lnot\forall xy.\, x=y\lor\lnot x=y$, while
  we have $\forall xy.\, x=y\lor\lnot x=y$ in $\Psh(\Crn)$ 
  using $\Tm(X)$ as above plus the fact that in $\Cvs$ two distinct variables
  can be substituted with one variable, whereas in $\Crn$ they must remain distinct.

  To sum up, these considerations show that the canonical models in
  these toposes are different, at least with respect to sentences
  formulated in logic with equality (and we also get a new proof that
  the morphisms in~\eqref{eq:comparison_morphisms} need not be open).

  To show that the toposes are actually different (i.e.,
  \emph{non-equivalent}), we can use the result that if $\C$ is
  idempotent complete, then it can be recovered up to equivalence from
  the presheaf topos $\Psh(\C)$ \cite[Lemma~A1.1.10]{Elephant}. Now
  note that the four categories $\Finmon$, $\Fin$, $\Fin_{0\ne1}$,
  $\Fin_{01}$ (and their opposites) are all idempotent complete and
  pairwise non-equivalent. Hence the above presheaf toposes are also
  pairwise non-equivalent.
\end{example}

Now we turn to purely relational signatures and show that also there
we find examples of (non-coherent) sentences that are forced but not
provable, not even classically.

\begin{example}\label{exa:forcing_not_all_x_Px}
Let $T$ be the empty theory over a signature with one unary predicate $P$.
Then the formula $\forall x. P(x)$ is never forced.
$(X;A)\forces\forall x. P(x)$ means by the definition of
forcing that $D\forces P(t)$ for all $g : D\to (X;A)$ and $t\in \Tm(D)$.
Since the theory is empty, we only have $D \covd \set{f}$,
for any isomorphism $f : E\to D$, and no other covers.
Now take $D=(X,y;A)$ with $y$ not in $X$,
and $g$ the identity substitution on $X$.
Clearly, there is no isomorphism $f : E\to D$ such that $P(y)f \in Af$,
so not $D\forces P(y)$ and hence not $(X;A)\forces\forall x. P(x)$.
Since $\forall x. P(x)$ is never forced, we get that
$(\forall x. P(x)) \to \bot$ is forced by any condition. 
However, this formula is not classically provable from $T$.
%Further exploiting this example, we get that the formula
%$((\forall x)P(x))\lor\neg((\forall x)P(x))$ is forced by any condition,
%classically true, but not intuitionistically.
\end{example}

The argument of the previous example plays an important
role in the next, more interesting example, which reveals
a difference between forcing based on $\Crn$ and on $\Cvs$.

\begin{lemma}\label{lem:forcing_diff_Crn_Cvs}
  Let $T$ be the theory with one axiom $\forall x,z.\, P(x) \to (Q(x,z) \lor R(x,z))$
  over the minimal relational signature in which this axiom can be
  expressed.  Let
  \[\phi := \exists x,y.~( P(x) \land P(y) \land
    \neg \forall z.\, (Q(x,z) \lor R(y,z))).\]
  Then $\Fvs \neg\phi$, and $\Frn \neg\neg\phi$.
\end{lemma}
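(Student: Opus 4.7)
The plan is to leverage a simple consistency observation for our specific theory $T$: since its single axiom has two disjuncts on the right (not zero), an easy induction on the definition of $\covd_T$ shows that no cover is empty at any condition. Hence $\bot$ is forced at no condition in either $\Cvs$ or $\Crn$, and the two claims reduce to: $\Fvs\neg\phi$ is equivalent to ``$\phi$ is forced at no $\Cvs$-condition'', while $\Frn\neg\neg\phi$ is equivalent to ``every $\Crn$-condition $(X;A)$ admits a morphism $D\to(X;A)$ in $\Crn$ with $D\Frn\phi$''.

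For the first, I would assume $(X;A)\Fvs\phi$ and unfold the two existentials to obtain a (nonempty) cover $U\covs(X;A)$ such that for some $g:(Z;C)\to(X;A)$ in $U$ there are $u,v\in Z$ with $(Z;C)\Fvs P(u)\land P(v)\land\neg\forall z.(Q(u,z)\lor R(v,z))$. The key move, available in $\Cvs$ but not in $\Crn$, is to identify $v$ with $u$: the variable substitution $Z\to Z\setminus\{v\}$ sending $v\mapsto u$ (identity otherwise) gives a legitimate morphism $k:(Z';Ck)\to(Z;C)$ in $\Cvs$ (taking $k=\id$ if $u=v$). Monotonicity transports the forcing to $(Z';Ck)\Fvs P(u)\land\neg\forall z.(Q(u,z)\lor R(u,z))$. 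By soundness (Theorem~\ref{thm:soundness}) applied to the intuitionistic derivation $T, P(x)\vdash_{x}\forall z.(Q(x,z)\lor R(x,z))$ with environment $x\mapsto u$, the canonical model satisfies $(Z';Ck)\Fvs\forall z.(Q(u,z)\lor R(u,z))$. Combining with the negated version yields $(Z';Ck)\Fvs\bot$, contradicting the preliminary observation.

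For the second, given an arbitrary $(X;A)$ I would pick fresh variables $u,v\notin X$ and set $D:=(X\cup\{u,v\};A\cup\{P(u),P(v)\})$; the inclusion $X\hookrightarrow X\cup\{u,v\}$ defines an arrow $D\to(X;A)$ in $\Crn$. Witnessing the existentials by $x:=u$, $y:=v$ via the trivial cover, it remains to show $D\Frn\neg\forall z.(Q(u,z)\lor R(v,z))$. For any $g:D'=(W';C')\to D$ in $\Crn$, $u':=ug$ and $v':=vg$ are distinct by injectivity. Were $D'$ to force the universal, then extending by a fresh variable $w$ would give $(W',w;C')\Frn Q(u',w)\lor R(v',w)$, whence by completeness (Theorem~\ref{thm:completeness}) also $T,C'\vdash_{W'\cup\{w\}} Q(u',w)\lor R(v',w)$. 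I would refute this by a classical model on $W'\cup\{*\}$ with $w\mapsto *$, $P^M=\{x\in W':P(x)\in C'\}$, setting $Q^M(u',*)=R^M(v',*)=\bot$ but $R^M(u',*)=Q^M(v',*)=\top$ (which satisfies the $T$-axiom at $u'$ and $v'$), and assigning the remaining $Q,R$-atoms so as to satisfy $C'$ and the axiom at every other element of $P^M$. Since intuitionistic derivability implies classical, this contradiction completes the argument.

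The main obstacle is the classical countermodel just described: it must simultaneously satisfy $C'$, fulfil the axiom of $T$ at every $P^M$-element for every second argument (including $*$), and refute the \emph{crossed} disjunction $Q(u',w)\lor R(v',w)$. The freedom to use the new element $*\notin W'$ as the interpretation of $w$ is essential, since $*$ is unconstrained by $C'$ and this enables the asymmetric assignment for $u'$ and $v'$. The underlying asymmetry between the two sites is precisely that in $\Cvs$ two distinct variables can be identified by a morphism, whereas in $\Crn$ morphisms must preserve their distinctness.
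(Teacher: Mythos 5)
Your proposal is correct, and its first half is essentially the paper's own argument: establish by induction on $\covT$ that no condition has an empty cover, extract a witness $(Z;C)$ from the cover unfolding the existentials, apply the non-injective identification $v\mapsto u$ available in $\Cvs$, and clash the forced instance $\forall z.(Q(u,z)\lor R(u,z))$ (obtained from the axiom via soundness) against the transported negation. For the second half you also set up exactly as the paper does (extend an arbitrary condition by fresh $x,y$ with $P(x),P(y)$, reduce to refuting that an injective extension forces $\forall z.(Q(x,z)\lor R(y,z))$, and pass to a fresh-$z$ extension), but you finish differently: where the paper argues directly that no cover of the extended condition can decide $Q(x,z)\lor R(y,z)$ (``formally, we argue by induction on $U\covs F$,'' left as a sketch), you invoke the completeness theorem (Theorem~\ref{thm:completeness}) to convert the putative forcing into a derivability claim $T,C'\vdash Q(u',w)\lor R(v',w)$ and refute that with an explicit classical countermodel exploiting the fresh element interpreting $w$ and the asymmetric assignment at $u'\neq v'$. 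This is a legitimate and arguably more fully checkable route --- there is no circularity, since completeness is proved before this example --- and it isolates cleanly where injectivity of $\Crn$-morphisms is used (the countermodel needs $u'\neq v'$); what it costs is self-containedness, since the paper's intended argument is a purely combinatorial induction on the inductively generated covers, whereas yours leans on the completeness theorem plus classical soundness of the $\vdash_X$ calculus. One cosmetic point: when you unfold the existentials you should first get ``for \emph{all} $f$ in the cover there are witnesses'' and only then select a particular $f$ using the nonemptiness observation; as written you conflate these two steps, though the intent is clear.
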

\begin{proof}
  We note first that $\bot$ does not occur in $T$.  By a simple
  induction on the definition of $C \covd U$ one can therefore
  construct an $f \in U$ for every $U \covs C$ ($\star$), so we never
  have $C \forces \bot$. By the definition of forcing, $\Fvs \neg\phi$
  means that $C \Fvs \bot$ whenever $C \Fvs \phi$, i.e., that
  $C \Fvs \phi$ is absurd. By the definition of forcing 
  (using transitivity of covers for the  iterated existential quantification), 
  $C \Fvs \phi$ means that for some
  $U \covs C$ we have, for all $f: D \to C$ in $U$,
  \[
    D \Fvs P(d_1) \land P(d_2) \land
    \neg\forall z.\, (Q(d_1,z) \lor R(d_2,z))
  \]
  for some $d_1,d_2 \in \Tm(D)$. By ($\star$) there exist such an
  $f\in U$. It is an intuitionistic consequence of the axiom of $T$
  that $\forall x.\, P(x) \to \forall z.\, Q(x,z)\lor R(x,z)$, so this
  is forced. Hence, taking $x:=d_1$ at $D$ we get
  \[
    D \Fvs \forall z.\, (Q(d_1,z) \lor R(d_1,z)).
  \]
  Since the signature is relational, the terms $d_1,d_2$ are
  variables.  We now use a non-injective substitution
  equating $d_1$ and $d_2$.  Define $g:E \to D$ by $g(d_i)=d_1$ for
  $i=1,2$, and $g$ the identity substitution on all other variables in
  $D$. By monotonicity of forcing it follows that
  \[
    E \Fvs (\forall z.\, Q(d_1,z) \lor R(d_1,z))
    \land \neg\forall z.\, (Q(d_1,z) \lor R(d_1,z)),
  \]
  which implies $E\Fvs\bot$, so by the definition of forcing, $E$ has
  an empty cover.  This conflicts with ($\star$).  Note that we have
  crucially used a substitution that is not injective. This means
  that for $\Frn$ the situation is completely different.

  We prove $\Frn \neg\neg\phi$ by showing that every condition can be
  extended to a condition in which $\phi$ is forced.  Let $C$ be a
  condition and let $D$ denote the extension of $C$ with two new
  variables $x,y$ and corresponding atoms $P(x),P(y)$.  Since
  $D \Frn P(x)\land P(y)$ it suffices to show that
  \[
    D \Frn \neg \forall z.\, (Q(x,z) \lor R(y,z)).
  \]
  Let $g: E\to D$ and assume $E \Frn \forall z.\, (Q(x,z) \lor R(y,z))$.
  This situation is contradictory for a subtle reason.  If $E$ would
  extend $D$ with the atom $x=y$, then there would be no contradiction
  at all. However, the language has no equality. If we would have
  $g(x)=g(y)$, then again there would be no contradiction at all.
  However, $g$ must be injective.  It turns out that a contradiction
  is inevitable under the given conditions.

  Like in Example~\ref{exa:forcing_not_all_x_Px}, extending $E$ with a
  new variable $z$ leads to a condition $F$ that cannot possibly force
  $Q(x,z) \lor R(y,z)$ (for simplicity of notation we assume that all
  renamings are implemented as injections). We must reason carefully
  since the theory $T$ contains one axiom, and we can force both
  $Q(x,z) \lor R(x,z)$ and $Q(y,z) \lor R(y,z)$. The intuition is that
  this gives four possibilities, one of which, $Q(y,z) \land R(x,z)$,
  does not force $Q(x,z) \lor R(y,z)$.  Since
  $F \Frn Q(x,z) \lor R(y,z)$ requires a $U \covs F$ such that for all
  $g: G\to F$ in $U$, either $G \Frn Q(x,z)$ or $G \Frn R(y,z)$, we
  arrive at a contradiction (formally, we argue by induction on $U
  \covs F$).
\end{proof}

Lemma~\ref{lem:forcing_diff_Crn_Cvs} gives a coherent theory $T$ 
and a sentence $\neg\phi$
that is forced in $\Fvs$ and hence, by Theorem~\ref{cor:topos_equiv},
is true in the generic model of $T^=$. Then, of course, $\neg\neg\phi$
is false in this model. At the same time, the lemma states
that $\neg\neg\phi$ is forced in $\Frn$. Hence, by the interplay
between soundness and coherent completeness
(Corollary~\ref{cor:coherent_complete}),
$\neg\neg\phi$ is $T$-redundant, as explained in the Introduction.

At first sight, this theory $T$ and sentence $\neg\neg\phi$ seem to 
answer Wraith's question cited in the introduction. 
However, there is subtle catch here:
Wraith's question has been posed in the context of logic with equality,
and $\Frn \neg\neg\phi$ has been shown in a logic without equality.
Moreover, as pointed out in the proof of Lemma~\ref{lem:forcing_diff_Crn_Cvs},
it is essential for $\Frn \neg\neg\phi$ that equality is not in the language.
%On the other hand, adding equality to the language
%does not affect truth in the generic model of $T^=$.
To answer Wraith's question properly, we have to show that
$\neg\neg\phi$ is $T$-redundant
\emph{in intuitionistic logic with equality}. But this follows from
our improved completeness result, Theorem~\ref{thm:plus-completeness},
because for a relational signature, $T^+$ is the same as $T^=$.

\section{Concluding remarks and open problems}\label{sec:conclusion}

We have presented three simple and natural  
syntactic forcing models for coherent logic,
that do not presuppose that the logic has equality.
In a future paper we shall characterize each of their sheaf toposes
as the classifing topos of a certain extension of the coherent theory
$T$. 
We have already seen a glimpse of this in
Theorem~\ref{thm:plus-completeness}, because $\Sh(\Cts,\covd_T)$ is
the classifying topos of $T^+$.

As an application of our forcing models we have given in 
Section~\ref{sec:examples}
a coherent theory $T$ and a sentence $\phi$ which is
$T$-redundant, yet false in the generic model of $T$
in logic with equality. This answers in the negative a question by
Wraith.

The above example uses predicates other than equality and uses that
logic with equality is conservative over logic without equality.
This leads us to the second open problem, which is the refinement 
of Wraith's question for algebraic coherent theories.
Given an algebraic (= purely equational) coherent theory $T$,
are all $T$-redundant sentences
true in the generic model of $T$?

\section*{Acknowledgements}

We are indebted to Peter Johnstone for information on the generic model.
The first two authors gratefully acknowledge the support of the 
Air Force Office of Scientific Research through MURI grant FA9550-15-1-0053. 
Any opinions, findings and conclusions or recommendations expressed in this material 
are those of the authors and do not necessarily reflect the views of the AFOSR.

\end{document}